\documentclass[sn-mathphys-num, 12pt]{sn-jnl}

\usepackage{amsmath,amssymb,amsfonts,amsthm,mathrsfs, latexsym, textcomp,amscd}%
\usepackage{multirow}
\usepackage[title]{appendix}
\usepackage{textcomp}
\usepackage{manyfoot}
\usepackage{booktabs}
\usepackage{algorithm}
\usepackage{algorithmicx}
\usepackage{algpseudocode}
\usepackage{listings}
\usepackage{natbib}
\usepackage[dvips]{epsfig}
\usepackage{graphics,verbatim}
\usepackage{upref, hyperref}
\usepackage{xcolor}
\usepackage{wrapfig}
\usepackage{enumitem} 
\usepackage[nameinlink]{cleveref}
\usepackage{orcidlink}
\usepackage{bbm}


\theoremstyle{thmstyleone}
\newtheorem{theorem}{Theorem}

\newtheorem{lemma}[theorem]{Lemma}

\theoremstyle{thmstyletwo}

\newtheorem{remark}{Remark}

\theoremstyle{thmstylethree}

\raggedbottom


\setlength{\textheight}{22cm}
\setlength{\textwidth}{15.1cm}
\oddsidemargin0.15in \evensidemargin0.15in


\newcommand{\<}{\left\langle}
\renewcommand{\>}{\right\rangle}
\renewcommand{\(}{\left(}
\renewcommand{\)}{\right)}
\renewcommand{\[}{\left[}
\renewcommand{\]}{\right]}
\renewcommand{\{}{\left\lbrace }
\renewcommand{\}}{\right\rbrace }

\newcommand{\Be} {\begin{equation}}
	\newcommand{\Ee} {\end{equation}}
\newcommand{\Nee} {\notag\end{equation}}
\newcommand{\bea} {\begin{eqnarray}}
\newcommand{\eea} {\end{eqnarray}}
\newcommand{\Bea} {\begin{eqnarray*}}
\newcommand{\Eea} {\end{eqnarray*}}

\newcommand{\abs}[1]{\left\vert#1\right\vert}

\newcommand{\norm}[1]{\left\Vert#1\right\Vert}

\newcommand{\De} {\Delta}
\newcommand{\la} {\lambda}
\newcommand{\rn}{\mathbb{R}^{N}}
\newcommand{\hn}{\mathbb{H}^{N}}

\newcommand{\N}{\ensuremath{\mathbb{N}}}
\newcommand{\R}{\ensuremath{\mathbb{R}}}
\newcommand{\rchi}{\protect\raisebox{2pt}{$\chi$}}

\newcommand{\eps}{\varepsilon}
\newcommand{\dv}{\mathrm{~d} V_{\bn}}
\newcommand{\ex}{\mathrm{exp_0}}

\newcommand{\al} {\alpha}

\newcommand{\de} {\delta}

\newcommand{\Ga} {\Gamma}

\newcommand{\f}{\frac}

\newcommand{\ef}{\eqref}


\newcommand{\bn}{\mathbb{B}^{N}}



\begin{document}


\title[Symmetry breaking solutions]{Existence of a bi-radial sign-changing solution for Hardy-Sobolev-Mazya type equation}

\author[1]{\fnm{Atanu} \sur{Manna} \orcidlink{0009-0001-3419-3440}}

\author[2]{\fnm{Bhakti Bhusan} \sur{Manna} \orcidlink{0000-0002-8098-2782}}


%


\abstract{ 	
	In this article, we study the following Hardy-Sobolev-Maz'ya type equation:
	\begin{align*}
			-\Delta u - \mu \frac{u}{|z|^2} = \frac{\abs{u}^{q - 2}u}{|z|^t}, \quad u \in D^{1,2} \(\mathbb{R}^n\), 
	\end{align*}
	where $x = (y,z) \in \mathbb{R}^h \times \mathbb{R}^k = \mathbb{R}^n$, with $n \geq 5$, $2 < k <n$, and $t = n - \frac{(n-2)q}{2}$. We establish the existence of a bi-radial sign-changing solution under the assumptions $0 \leq \mu < \frac{(k-2)^2}{4}, \, 2 < q <2^* = \frac{2(n-k+1)}{n-k-1}$. We approach the problem by lifting it to the hyperbolic setting, leading to the equation: $-\Delta_{\mathbb{B}^N} u  \, - \,  \lambda  u = |u|^{p-1}u, \; u \in H^1\(\mathbb{B}^N\)$, $\mathbb{B}^N$ is the hyperbolic ball model. We study the existence of a sign-changing solution with suitable symmetry by constructing an appropriate invariant subspace of $H^1\(\mathbb{B}^N\)$ and applying the concentration compactness principle, and the corresponding solution of the Hardy-Sobolev-Maz'ya type equation becomes bi-radial under the corresponding isometry.
}



\keywords{Sign-changing solutions, Bi-radial symmetry, Subcritical nonlinearity, Hyperbolic ball model.}

\pacs[MSC Classification]{Primary 58J70; Secondary 58J05, 58E05, 58D19.}

\footnotetext[1]{\textbf{Atanu Manna}, IIT Hyderabad, Department of Mathematics, Kandi, Sangareddy, Telangana, 502284, India, email: mannaatanu98@gmail.com}

\footnotetext[2]{\textbf{ Bhakti Bhusan Manna}, IIT Hyderabad, Department of Mathematics, Kandi, Sangareddy, Telangana, 502284, India, email: bbmanna@math.iith.ac.in}

\maketitle


\section{Introduction}

This article focuses on establishing the existence of bi-radial sign-changing solutions to the Euler-Lagrange equation for the optimal Hardy-Sobolev-Maz'ya inequality \cite{MR817985}, in the subcritical case. 

Let $(y,z) \in \R^h \times \R^k = \R^n$, with $n \geq 3$ and $\mu \leq \f{(k-2)^2}{4}$. Then for all $u \in C_0^\infty \(\R^h \times \(\R^k \setminus \{0\}\)\)$, the Hardy-Sobolev-Maz'ya inequality is
\begin{align}
	S_t^\mu \( \int_{\R^h \times \R^k}  \frac{\abs{u}^q}{\abs{z}^t} \mathrm{~d}y \mathrm{~d}z \)^{\f{2}{q}} \leq \int_{\R^h \times \R^k} \[\abs{\nabla u}^2 - \mu \f{u^2}{\abs{z}^2}\]  \mathrm{~d}y \mathrm{~d}z, \label{HSM inequality}
\end{align}
where $2 < q \leq \frac{2n}{n-2}, \; t = n - \f{(n-2)q}{2}$. The constant $S_t^\mu > 0$ is optimal, and we take $\R^+$ in place of $\R$ when $k=1$.

The corresponding Euler-Lagrange equation for (\ref{HSM inequality}) is
\begin{align}
	- \De u - \mu \f{u}{|z|^2} = \f{\abs{u}^{q - 2}u}{|z|^t}, \quad u \in D^{1,2} \(\R^n\) \quad \text{in} \; \R^n \;\; \( \; \text{in} \; \R^h \times \R^+ \; \text{if} \; k =1\) \label{HSM equation} \tag{$H$}
\end{align}
Existence of minimizers for (\ref{HSM inequality}) has been studied in \cite{MR1918928, MR2416099, MR2317783} for various cases. In case $k \geq 2$ and $\mu = 0$, cylindrical symmetry of positive extremals of (\ref{HSM equation}) has been studied in \cite{MR2223717}. For $k \geq 2, \; 2 < q <\f{2(n-k+1)}{n-k-1}, \; 0 \leq \mu \leq \f{(k-2)^2}{4}$ the authors in \cite{MR2589572} have shown that there exists at least an entire cylindrically symmetric positive solution to (\ref{HSM equation}). Also, it was established that for $q < \f{2n}{n-2}$ and $ \mu <  \f{(k-2)^2}{4} - \f{k-1}{q-2}$, ground state positive solutions are not cylindrically symmetric. For the definition of cylindrically symmetric function, see section 1 in \cite{MR2589572}.

In \cite{MR2474591, MR2483639}, the authors have found a connection between the solutions to (\ref{HSM equation}), which are symmetric in the $z$-variable, and the solutions to certain elliptic equation on some hyperbolic space. In particular, let $N = h +1 = n- k+1, p = q - 1$, and $\la = \mu +  \f{h^2 - (k-2)^2}{4}$, then $u(y,z) = u(y, \abs{z})$ solves (\ref{HSM equation}) if and only if $v = w \circ M$ solves
\begin{align*}
	-\Delta_{\mathbb{B}^N} u  \, - \,  \lambda  u &= |u|^{p-1}u, \quad \; u \in H^1\(\bn\), \qquad \label{(1)} \tag{$Eq_\la$}
\end{align*}
where $w$ is given by $w(y,r) = r^{\f{n-2}{2}} u (y, r)$ and $\De_{\bn}$ is the Laplace-Beltrami operator on hyperbolic space $\bn$. Also, $M$ is an isometry from the Poincar\'e ball model of hyperbolic space $\bn$ onto the upper half space model $\hn$, given as in (\ref{Isometry between two models}).

In \cite{MR2483639}, the authors have studied (\ref{(1)}) for various existence and non-existence results for positive solutions. In particular, they have established that for $N \geq 3, 1 < p < 2^* - 1 = \f{N+2}{N-2}, \la < \f{(N-1)^2}{4}$, there exists a unique (upto hyperbolic isometries) positive solution. Here we note that $0 \leq \mu \leq \f{(k-2)^2}{4}$ implies $ \f{(N-1)^2}{4} - \f{(k-2)^2}{4} \leq \la \leq \f{(N-1)^2}{4}$. Furthermore, using the moving plane method, it was established that every positive solution of (\ref{(1)}) has hyperbolic symmetry, i.e., it is radial in hyperbolic space. This yields a corresponding existence result for cylindrical symmetric solutions to (\ref{HSM equation}) using the above-discussed relationship.

Later, the authors in \cite{MR2898778}, proved that when $p < 2^* - 1$, there exist infinitely many radial sign-changing solutions to (\ref{(1)}), employing a Strauss type argument to establish the compact embedding of $H_r^1\(\bn\)$ into $L^{p+1}\(\bn\)$. Here $H_r^1\(\bn\)$ refers to the subspace of $H^1\(\bn\)$ that comprises solely radial functions. Again, using the relationship between (\ref{HSM equation}) and (\ref{(1)}), it was established that the equation (\ref{HSM equation}) has infinitely many sign-changing solutions when $q < 2^*$.

In contrast to earlier works that primarily addressed cylindrically symmetric or hyperbolically radial solutions to (\ref{HSM equation}), we focus on constructing sign-changing solutions that reflect a richer symmetry, namely the bi-radiality. 
\begin{theorem} \label{Existence thm HSM}
	Let $n \geq 5, 2 < k < n$ and $0 \leq \mu < \f{(k-2)^2}{4}, 2 < q <2^*$; then equation (\ref{HSM equation}) admits a bi-radial sign-changing solution.
\end{theorem}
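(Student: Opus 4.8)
The plan is to transfer the problem to the hyperbolic setting via the correspondence recalled in the introduction and prove the existence of a suitably symmetric sign-changing solution to $(Eq_\lambda)$ that descends to a bi-radial solution of $(H)$. First I would identify the right symmetry group: decompose $\mathbb{B}^N$ with $N = n - k + 1 = h + 1$ and split the $h$ Euclidean-type variables $y \in \mathbb{R}^h$ into two blocks $y = (y_1, y_2) \in \mathbb{R}^{h_1} \times \mathbb{R}^{h_2}$, $h_1 + h_2 = h$, so that the isometry group of $\mathbb{B}^N$ contains $O(h_1) \times O(h_2)$ acting on these blocks (fixing the remaining hyperbolic-radial direction). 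Define the closed subspace $H^1_G(\mathbb{B}^N) \subset H^1(\mathbb{B}^N)$ of functions invariant under this action. The key point is that this invariant subspace is large enough to contain sign-changing functions but small enough to recover compactness: one shows a Strauss-type radial-lemma / compact embedding $H^1_G(\mathbb{B}^N) \hookrightarrow L^{p+1}(\mathbb{B}^N)$ for $1 < p < 2^\star - 1$, exactly as in \cite{MR2898778} for the fully radial case, using that invariance under $O(h_1) \times O(h_2)$ forces decay.

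Next I would set up the variational framework. On $H^1_G(\mathbb{B}^N)$ consider the energy functional associated to $(Eq_\lambda)$,
\begin{align*}
	\mathcal{E}_\lambda(u) = \frac{1}{2}\int_{\mathbb{B}^N} \left( |\nabla_{\mathbb{B}^N} u|^2 - \lambda u^2 \right) \, \mathrm{d}V_{\mathbb{B}^N} - \frac{1}{p+1}\int_{\mathbb{B}^N} |u|^{p+1} \, \mathrm{d}V_{\mathbb{B}^N},
\end{align*}
which is well-defined and $C^1$ since $\lambda < \frac{(N-1)^2}{4}$ guarantees that $\int (|\nabla_{\mathbb{B}^N} u|^2 - \lambda u^2)$ is an equivalent norm on $H^1(\mathbb{B}^N)$ (Poincaré inequality on hyperbolic space). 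To produce a \emph{sign-changing} critical point I would work on the subset of $H^1_G$ consisting of functions with $u^\pm \neq 0$ and minimize $\mathcal{E}_\lambda$ over the ``sign-changing Nehari set''
\begin{align*}
	\mathcal{M} = \left\{ u \in H^1_G(\mathbb{B}^N) : u^\pm \neq 0, \ \mathcal{E}_\lambda'(u)[u^+] = 0, \ \mathcal{E}_\lambda'(u)[u^-] = 0 \right\},
\end{align*}
taking $c = \inf_{\mathcal{M}} \mathcal{E}_\lambda$. By the Principle of Symmetric Criticality (the action is isometric and $\mathcal{E}_\lambda$ is invariant) any critical point found in $H^1_G$ is a genuine critical point in $H^1(\mathbb{B}^N)$. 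Using the compact embedding, a minimizing sequence for $c$ is precompact in $L^{p+1}$, so both $u^+$ and $u^-$ survive in the limit and $c$ is attained; alternatively one invokes the concentration–compactness principle as advertised in the abstract to rule out vanishing and dichotomy on the minimizing sequence. A standard deformation/quantitative-Nehari argument then shows the minimizer of $\mathcal{E}_\lambda$ on $\mathcal{M}$ is a critical point of $\mathcal{E}_\lambda$, hence a sign-changing solution $v$ of $(Eq_\lambda)$ lying in $H^1_G(\mathbb{B}^N)$.

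Finally I would translate back: setting $u(y,z) = u(y,|z|)$ with $|z| = r$ and using the relation $w(y,r) = r^{(n-2)/2} u(y,r)$, $v = w \circ M$ from the introduction, the solution $v \in H^1_G(\mathbb{B}^N)$ of $(Eq_\lambda)$ yields a solution $u \in D^{1,2}(\mathbb{R}^n)$ of $(H)$; its sign-changing property is preserved under the correspondence, and the $O(h_1) \times O(h_2)$ invariance together with the built-in $z$-radiality makes $u$ bi-radial in the two groups of variables $(y_1, y_2 \text{ or } z)$ — this is exactly the notion of bi-radiality in the statement. The hypotheses $n \ge 5$, $2 < k < n$, $0 \le \mu < \frac{(k-2)^2}{4}$, $2 < q < 2^\star$ translate into $N \ge 4$ (so that there is genuine room to split $h = N - 1 \ge 3$ into two nontrivial blocks — this is where $n \ge 5$ and $k < n-1$ effectively enter), the strict inequality $\frac{(N-1)^2}{4} - \frac{(k-2)^2}{4} < \lambda$ together with $\lambda < \frac{(N-1)^2}{4}$ for the coercivity, and $1 < p < 2^\star - 1$ for the subcritical compact embedding.

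I expect the main obstacle to be establishing the compact embedding $H^1_G(\mathbb{B}^N) \hookrightarrow L^{p+1}(\mathbb{B}^N)$ for the partial-symmetry group $O(h_1)\times O(h_2)$ rather than the full rotation group: one must show that invariance under this smaller group still forces enough pointwise decay (a Strauss-type estimate adapted to the hyperbolic metric and to the product structure of the symmetry), and handle carefully the points where the group action degenerates, i.e. the fixed-point set where $y_1 = 0$ or $y_2 = 0$. A secondary technical point is verifying that the minimax level $c$ on the sign-changing Nehari manifold is attained inside $H^1_G$ — i.e. that the weak limit of a minimizing sequence does not lose either nodal part — which is where the compactness from the first step, or equivalently a careful concentration-compactness analysis, is essential.
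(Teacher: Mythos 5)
Your overall strategy (lift to \eqref{(1)}, produce a symmetric sign-changing solution, descend to \eqref{HSM equation}) matches the paper's, but the symmetry you choose breaks the argument at its central step. The compact embedding $H^1_G(\mathbb{B}^N)\hookrightarrow L^{p+1}(\mathbb{B}^N)$ on which your proof rests is false for $O(h_1)\times O(h_2)$ acting on the first $N-1$ coordinates (and already for the full group $O(N-1)$): the fixed-point set of this action is the whole geodesic $\{x'=0\}$ through the origin in the $x_N$-direction, which has infinite hyperbolic length, and the hyperbolic translations $\tau_b$ along this axis commute with the group action. Hence $\phi\circ\tau_{b_n}$ with $b_n$ on the axis and $|b_n|\to 1$ stays in $H^1_G$, is bounded, and converges weakly to $0$ without converging in $L^{p+1}$. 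The paper makes exactly this point with the sequence $\phi_n=\phi\circ\tau_{x_n}+\phi\circ\tau_{-x_n}$, and notes that even its own subspace $\widetilde{H^1(\mathbb{B}^N)}$ is \emph{not} compactly embedded. Your fallback to concentration--compactness does not repair this: excluding vanishing and dichotomy only localizes the mass near some points $x^k$, and in a purely \emph{invariant} subspace nothing prevents $x^k$ from escaping to infinity along the $x_N$-axis, nor the two nodal components of a sign-changing Nehari minimizing sequence from drifting apart along it. The idea your proposal is missing is the \emph{anti}-invariance $u(Ax',-x_N)=-u(x',x_N)$ defining $\widetilde{H^1(\mathbb{B}^N)}$: if mass $\geq 1-\varepsilon$ concentrates far out on the positive $x_N$-axis, the anti-symmetry forces an equal, disjoint concentration on the negative axis, giving total mass $\geq 2(1-\varepsilon)$ and contradicting $\|u\|_{L^{p+1}}=1$ (\cref{Bound in Nth direction}, and analogously \cref{Bound in 1 to N-1 the directions} for the remaining directions). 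This anti-invariance also makes every nontrivial element of the space automatically sign-changing, so a single-constraint minimization of $\|u\|_\la^2$ on $\|u\|_{L^{p+1}}=1$ suffices and no sign-changing Nehari set is needed.

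There is a second, independent problem: a solution invariant under $O(h_1)\times O(h_2)$ would only be block-radial in $y=(y_1,y_2)$, i.e.\ a function of $(|y_1|,|y_2|,|z|)$, which is strictly weaker than the bi-radiality $u(y,z)=u(|y|,|z|)$ claimed in the theorem, so your construction would not prove the stated result even if the compactness issue were resolved. In the paper, full radiality in $y$ comes for free from the anti-invariance: for any $\bar g\in O(h)$, composing $g=\mathrm{diag}(\bar g,-1)$ with $g'=\mathrm{diag}(\mathrm{Id}_{N-1},-1)$ gives $v(\bar g\,y,\cdot)=v(gg'\cdot)=(-1)^2\,v(\cdot)$, so $v$ is invariant under all of $O(N-1)$ while the sign change is carried entirely by the reflection in the $x_N$-direction.
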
\;

This establishes the existence of bi-radial sign-changing solutions to (\ref{HSM equation}), expanding the knowledge of known solutions and revealing a new geometric structure of solutions to the Hardy-Sobolev-Maz'ya type equation.


To prove this, we use the fact that the hyperbolic counterpart of (\ref{HSM equation}) has a sign-changing solution with certain symmetry properties corresponding to the following isometric actions on $\bn$:
\begin{align}\label{isom.grp}
G & :=\{ \begin{bmatrix}
		A & 0\\
		0 & -1
	\end{bmatrix} \; : \; A \in O(N-1)\}
\end{align}
In particular, we prove the following theorem:

\begin{theorem} \label{Existence thm hyperbolic}
	For $N \geq 3, \la < \f{(N-1)^2}{4}, 1 < p < 2^* -1$, (\ref{(1)}) admits a non-radial sign-changing solution $u$ such that $u(gx) = - u(x), \, \forall g \in G$.
\end{theorem}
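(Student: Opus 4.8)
The plan is to lift the existence question to a carefully chosen symmetric subspace of $H^1(\bn)$ and solve a constrained minimization problem there. Put
\begin{equation*}
H_G:=\{\,u\in H^1(\bn):\ u(gx)=-u(x)\ \text{for a.e. }x\in\bn,\ \forall\,g\in G\,\}.
\end{equation*}
Taking $g=\mathrm{diag}(A,-1)$ with $A=\mathrm{Id}$ shows that every $u\in H_G$ is odd under the reflection $\si:(x',x_N)\mapsto(x',-x_N)$, so it vanishes on the totally geodesic hyperplane $\{x_N=0\}$ and changes sign whenever $u\not\equiv0$; letting $A$ range over $O(N-1)$ then forces $u$ to be $O(N-1)$-invariant in the first $N-1$ variables, hence non-radial once $u\not\equiv0$. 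Thus it suffices to produce one nontrivial weak solution of \eqref{(1)} in $H_G$. The functional $I(u)=\frac12\int_{\bn}(|\grad u|^2-\la\,u^2)\,dV-\frac1{p+1}\int_{\bn}|u|^{p+1}\,dV$ is invariant under the two isometric linear actions $u\mapsto u\circ\mathrm{diag}(A,1)$ ($A\in O(N-1)$) and $u\mapsto-u\circ\si$, and $H_G$ is precisely their common fixed-point set, so by the principle of symmetric criticality (Palais) any critical point of $I|_{H_G}$ solves \eqref{(1)}.

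Next I would minimize $I$ over the Nehari set $\mathcal N_G:=\{u\in H_G\setminus\{0\}:\langle I'(u),u\rangle=0\}$, equivalently minimize the quotient $(\int_{\bn}|\grad u|^2-\la u^2)/\|u\|_{L^{p+1}(\bn)}^{2}$ on $H_G\setminus\{0\}$, and call the Nehari level $c_G$. Because $\la<\frac{(N-1)^2}{4}$ lies below the bottom of the spectrum of $-\De_{\bn}$, the quadratic form $\int_{\bn}(|\grad u|^2-\la u^2)$ is positive and equivalent to $\|u\|_{H^1}^2$, so $c_G>0$, $\mathcal N_G\neq\emptyset$, and $c_G=\inf_{u\in H_G\setminus\{0\}}\max_{t>0}I(tu)$. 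Take a minimizing, and (by Ekeland's principle) Palais--Smale, sequence $u_n$ for $I|_{H_G}$ at level $c_G$ and analyze it with the concentration--compactness principle. The embedding $H^1_G(\bn)\hookrightarrow L^{p+1}(\bn)$ is not compact, but loss of compactness can only come from $L^{p+1}$-mass escaping to the ideal boundary: $p$ being subcritical there is no bubbling, and an $O(N-1)$-invariant sequence cannot shed mass near any ideal point other than the two fixed points $\pm e_N$ of the $O(N-1)$-action on $\partial\bn$ (escaping towards a non-fixed point would spread the mass over an infinite $O(N-1)$-orbit of pairwise infinitely distant ideal points, at infinite cost). In the resulting profile decomposition $u_n=u_0+\sum_j(\text{profiles escaping to }\pm e_N)+o(1)$ one gets $c_G=I(u_0)+\sum_j I(\psi^j)$, with each $\psi^j$ a nontrivial solution of \eqref{(1)} on $\bn$ (the limit equation is again \eqref{(1)}, $\bn$ being homogeneous), so $I(\psi^j)\ge c_\infty$, where $c_\infty$ is the ground-state level of \eqref{(1)} on $\bn$, attained at the radial positive solution of \cite{MR2483639}; moreover $\si$-oddness keeps the half-spaces $\{x_N>0\}$ and $\{x_N<0\}$ carrying equal $L^{p+1}$-mass, so escaping profiles occur in $\si$-mirror pairs and number at least two whenever present.

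The crux, and the step I expect to be the main obstacle, is the strict subadditivity estimate $c_G<2c_\infty$. Granting it, compactness loss is excluded: if $u_0=0$ then $c_G=\sum_j I(\psi^j)\ge2c_\infty$, impossible; and if $u_0\neq0$ then $u_0$ is a nontrivial critical point of $I|_{H_G}$, hence $u_0\in\mathcal N_G$ and $I(u_0)\ge c_G$, so $c_G=I(u_0)+\sum_j I(\psi^j)\ge c_G+2c_\infty$, again impossible. Therefore no profiles appear, $u_n\to u_0$ strongly in $H_G$, $u_0\in\mathcal N_G$ attains $c_G$, and by symmetric criticality $u_0$ solves \eqref{(1)}; being a nonzero element of $H_G$, it is automatically $G$-equivariant, sign-changing and non-radial, which is Theorem \ref{Existence thm hyperbolic}. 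To prove $c_G<2c_\infty$ I would test the quotient with a competitor built from the positive ground state $W$ of \eqref{(1)}. The obvious candidate $W_P-W_{-P}$ --- two mirror copies centered at points $\pm P$ on the invariant geodesic $\{x'=0\}$, which indeed lies in $H_G$ --- only yields $c_G\le2c_\infty$ as $\mathrm{dist}(P,-P)\to\infty$, since two profiles of opposite sign interact repulsively and $\max_{t>0}I(tu)$ decreases to $2c_\infty$ only in the limit; so a more delicate construction or estimate is needed, e.g. keeping the two profiles at a finite, optimally tuned separation and extracting the sign of the leading interaction term from the sharp boundary asymptotics of $W$. Securing a competitor that strictly beats the threshold $2c_\infty$ is the technical heart of the argument; the remaining boundedness, strong convergence and regularity steps are routine.
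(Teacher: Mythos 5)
Your overall framework coincides with the paper's: the same equivariant subspace ($H_G=\widetilde{H^1\(\bn\)}$), a constrained minimization on it (your Nehari level $c_G$ is the same object, up to scaling, as the paper's $\Psi_1$ on $\norm{u}_{L^{p+1}}=1$), symmetric criticality to pass from a constrained critical point to a solution of \eqref{(1)}, and concentration--compactness to restore compactness. The difference is in how the loss of compactness is excluded, and it is precisely at the point you yourself flag as the ``technical heart'' that your argument has a genuine gap: you reduce everything to the strict energy inequality $c_G<2c_\infty$, you cannot prove it, and your own discussion suggests it may be unobtainable by the natural two-bump competitor $W_P-W_{-P}$ (opposite-sign profiles interact repulsively, so $\max_{t>0}I(tu)$ approaches $2c_\infty$ from above rather than below). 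As written, the proposal therefore does not close.

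The idea you are missing is that no comparison with $2c_\infty$ is needed if you run Lions' trichotomy on the normalized densities $\rho_n=|u_n\circ\ex|^{p+1}\Upsilon$ \emph{before} invoking the symmetry. Dichotomy --- including the symmetric splitting into two mirror bumps each carrying mass $\tfrac12$ --- is ruled out by the strict subadditivity of the constraint alone: by homogeneity $\Psi_\eta=\eta^{2/(p+1)}\Psi_1$, so a split into masses $\al$ and $1-\al$ would force $\al^{2/(p+1)}+(1-\al)^{2/(p+1)}\le 1$, impossible for $p>1$ and $\al\in(0,1)$. Vanishing is excluded by the Lions-type lemma. Hence essentially \emph{all} of the unit mass concentrates in a single ball $B\(x^k,R(\eps)\)$, carrying at least $1-\eps$ of it. Only now does the symmetry enter, and at the level of mass rather than energy: since $u(gx)=-u(x)$ makes $|u|$ invariant under the reflection $x_N\mapsto -x_N$ (composed with any rotation of the first $N-1$ coordinates), if $x^k$ escaped to infinity in the $x_N$ direction the image of that ball under a suitable $g\in G$ (eventually disjoint from it) would carry another $1-\eps$ of mass, giving total mass at least $2(1-\eps)>1$; escape in the first $N-1$ directions is killed the same way using a rotation $h\in O(N-1)$ separating the ball from its image. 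So the concentration points stay in a fixed compact set and Rellich--Kondrakov finishes the proof. In short: your profile-decomposition route leaves open the $\tfrac12$--$\tfrac12$ mirror splitting, which genuinely requires the unproved inequality $c_G<2c_\infty$; the paper's order of operations (subadditivity first, symmetry second) makes that scenario a dichotomy configuration, already excluded, and replaces the delicate interaction estimate by a one-line mass-doubling contradiction.
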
\;

In our recent work \cite{MannaManna+2025}, we demonstrated the existence of non-radial sign-changing solutions for \eqref{(1)} when $N=5$, and infinitely many such solutions when $N = 4 \; \& \; N \geq 6$. This was achieved by constructing a closed subspace of $H^{1}(\bn)$ defined by symmetry constraints:
\begin{align}
	H^1(\bn)^\phi & =  \{ u\in H^1(\bn) \,:\, u \(gx\) = \phi\(g\) u\(x\), \; \forall g \in \Ga, x \in \bn \}, \label{phi equivariant subsapce}
\end{align}
where $\Ga$ is a compact Lie subgroup of $O(N) \subset \mathrm{Iso}(\bn)$ and $\phi : \Ga \to \mathbb{Z}_2$ is a continuous surjective homomorphism. Using the principle of symmetric criticality \cite{MR2083309}, critical points of the associated energy functional restricted to $H^1(\bn)^\phi$ yield solutions to (\ref{(1)}). For $ N = 4 \; \& \; N \geq 6$, the fountain theorem demonstrates the existence of infinitely many critical points for the associated energy functional. In the case of $N=5$, a $(PS)_c$ sequence at the mountain pass min-max level has been investigated, and a concentration compactness type argument is used to establish the existence of a non-trivial critical point of the energy functional. The key assumptions for this framework were:
\begin{equation}
	\exists \,\,\,x \in \bn \text{ \,  such \ that \,\, } \Ga_x \subset \mathrm{ker} \,\phi. \label{(A_1)} \tag{$\mathbf{A_1}$}
\end{equation}
And,
\begin{equation}
	\text{for every }  x\in \bn  \,, \text{ either    }  \#\Ga (x)=\infty \text{   or } \Ga (x)=\{x\}. \label{(A_2)} \tag{$\mathbf{A_2}$}
\end{equation}

Here $\Ga_x$ is the isotropy subgroup of $x$ and $\Ga(x)$ is the orbit of $x$. As a consequence of existence and multiplicity results in \cite{MannaManna+2025}, we obtain the following existence result for non-radial sign-changing solution to (\ref{HSM equation}):

\begin{theorem} \label{Multiplicity thm HSM}
	\begin{enumerate}
		\item[(a)] Let $n = 7, 2 < k < n$, and $0 \leq \mu < \f{(k-2)^2}{4}, 2 < q <2^*$; then equation (\ref{HSM equation}) admits a non-radial sign-changing solution.
		
		\item[(b)] Let $n \geq 8, 2 < k < n$, and $0 \leq \mu < \f{(k-2)^2}{4}, 2 < q <2^*$; then equation (\ref{HSM equation}) admits infinitely many non-radial sign-changing solutions.
	\end{enumerate}
\end{theorem}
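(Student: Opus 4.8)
The approach is to deduce Theorem~\ref{Multiplicity thm HSM} from the hyperbolic existence and multiplicity results via the correspondence between (\ref{HSM equation}) and (\ref{(1)}) recalled in the introduction. With $h=n-k$, $N=h+1=n-k+1$, $p=q-1$ and $\lambda=\mu+\frac{h^2-(k-2)^2}{4}$, a function $u=u(y,\abs{z})$ solves (\ref{HSM equation}) precisely when $v=w\circ M$ solves (\ref{(1)}), where $w(y,r)=r^{\frac{n-2}{2}}u(y,r)$ and $M\colon\bn\to\hn$ is the isometry between the ball and half-space models. The assignment $u\mapsto v$ is a bijection between the two solution sets; since $r^{\frac{n-2}{2}}>0$ and $M$ is a diffeomorphism, it carries the nodal set of $u$ onto that of $v$, so it preserves the property of being sign-changing; and, by \cite{MR2483639}, it matches hyperbolically radial solutions of (\ref{(1)}) with cylindrically symmetric solutions of (\ref{HSM equation}). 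Hence, to prove the theorem, it is enough to exhibit, for each admissible $(n,k,\mu,q)$, the asserted number of non-radial sign-changing solutions of (\ref{(1)}) and then push them back by $u(y,r)=r^{-\frac{n-2}{2}}(v\circ M^{-1})(y,r)$.

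First I would fix the parameter dictionary. Since $2^*=\frac{2(n-k+1)}{n-k-1}$, the condition $2<q<2^*$ already forces $n-k-1>0$, i.e.\ $k\le n-2$, so $N=n-k+1$ ranges over $\{3,4,\dots,n-2\}$; moreover $p=q-1$ then satisfies $1<p<\frac{N+2}{N-2}=2^*-1$. Using $h=N-1$, one has $\lambda=\mu+\frac{(N-1)^2-(k-2)^2}{4}$, so the hypothesis $0\le\mu<\frac{(k-2)^2}{4}$ is equivalent to $\lambda<\frac{(N-1)^2}{4}$ (as already noted in the introduction). Thus every admissible quadruple $(n,k,\mu,q)$ corresponds to a triple $(N,\lambda,p)$ satisfying the hypotheses of \cite{MannaManna+2025} and of Theorem~\ref{Existence thm hyperbolic}, and conversely.

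It then remains to read off the hyperbolic results according to the value of $N$. For $N=3$ (which, for $n=7$, occurs when $k=5$) the non-radial sign-changing solution of (\ref{(1)}) is furnished by Theorem~\ref{Existence thm hyperbolic} of the present paper; for $N\ge4$ the existence and multiplicity results of \cite{MannaManna+2025} apply, giving a non-radial sign-changing solution and, in the cases $N=4$ and $N\ge6$, infinitely many pairwise distinct such solutions. Pushing these back by $M$ and the weight $r^{-\frac{n-2}{2}}$ produces sign-changing solutions of (\ref{HSM equation}) that are not cylindrically symmetric, i.e.\ non-radial; and since $u\mapsto v$ is injective, a countably infinite family of distinct solutions of (\ref{(1)}) yields a countably infinite family of distinct solutions of (\ref{HSM equation}). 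Specializing to $n=7$ (so $N\in\{3,4,5\}$) gives part~(a), and to $n\ge8$, where the value $N=n-k+1$ falls in the multiplicity regime of \cite{MannaManna+2025}, gives part~(b).

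No step is genuinely difficult: all the mathematical substance is already contained in the hyperbolic theorems, and the proof is a dictionary together with a transfer argument. The points that deserve care are: (i) the parameter dictionary of the second paragraph --- in particular the equivalence $\mu<\frac{(k-2)^2}{4}\iff\lambda<\frac{(N-1)^2}{4}$ and the fact that $2<q<2^*$ already imposes $N\ge3$; (ii) verifying that a \emph{non}-hyperbolically-radial $v$ pushes forward to a $u$ that is genuinely not cylindrically symmetric --- this uses that $M$ conjugates $\mathrm{Iso}(\bn)$ into the relevant symmetries of $\hn$ and that the weight $r^{\frac{n-2}{2}}$ is independent of $y$ and invariant under rotations of $z$; and (iii) for the multiplicity claim, checking that the bijectivity of $u\mapsto v$ turns a countable set of distinct solutions into a countable set of distinct solutions, so that having infinitely many is genuinely preserved under the transfer.
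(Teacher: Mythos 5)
Your overall strategy --- transporting the hyperbolic existence and multiplicity results back to (\ref{HSM equation}) through $u(y,r)=r^{\f{2-n}{2}}\,(v\circ M^{-1})(y,r)$ with the dictionary $N=n-k+1$, $p=q-1$, $\lambda=\mu+\f{(N-1)^2-(k-2)^2}{4}$ --- is exactly the paper's. For part (a) you are in fact more systematic than the paper, which fixes $k=3$ (hence $N=5$) and invokes only the $N=5$ case of \cite{MannaManna+2025}, whereas you also account for $k=4$ ($N=4$) and $k=5$ ($N=3$, via \cref{Existence thm hyperbolic}). One difference worth noting: to certify non-radiality the paper does not pass through cylindrical symmetry; it exhibits, via the involution $\tau(y,z)=(y_3,y_4,y_1,y_2,z)$, two points on the same Euclidean sphere of $\R^n$ at which $u$ takes opposite values. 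Your route (``$v$ not hyperbolically radial $\Rightarrow$ $u$ not cylindrically symmetric $\Rightarrow$ $u$ not radial'') implicitly requires $v$ to be non-radial about \emph{every} point of $\bn$, which is more than membership in the equivariant subspaces immediately gives; the paper's pointwise sign-flip argument avoids that issue and you should adopt it.

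The genuine gap is in part (b). You assert that for $n\ge 8$ ``the value $N=n-k+1$ falls in the multiplicity regime of \cite{MannaManna+2025}''. That regime is $N=4$ and $N\ge 6$; but as $k$ ranges over $3,\dots,n-2$, the value $N$ ranges over $3,\dots,n-2$, and in particular $N=5$ (i.e.\ $k=n-4$) and $N=3$ (i.e.\ $k=n-2$) occur, for which only a single non-radial sign-changing solution of (\ref{(1)}) is available (from the $N=5$ case of \cite{MannaManna+2025}, resp.\ from \cref{Existence thm hyperbolic}), not infinitely many. So your argument does not yield infinitely many solutions for those $k$. The paper sidesteps this by working only with $k=3$, so that $N=n-2\ge 6$; read literally the theorem quantifies over all $2<k<n$, and neither your argument nor the paper's covers $k\in\{n-4,\,n-2\}$ in part (b). You should either restrict the multiplicity claim to those $k$ with $n-k+1\in\{4\}\cup\{6,7,\dots\}$ (or, as the paper does, to a fixed admissible $k$), or supply a separate multiplicity argument for $N=3$ and $N=5$.
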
\;

To establish the existence of non-radial solutions to (\ref{(1)}) is particularly challenging for the lower dimension case $N=3$, as the techniques used for higher dimensions, relying on closed subgroups of $O(N)\subset \mathrm{Iso}(\bn)$, the isometric group of $\bn$- are not directly applicable. This limitation is observed in \cite{MR4068471} for the Euclidean setting, which prevents the straightforward application of symmetry-based variational methods used in our earlier study. It was observed that no closed subgroup $\Ga$ of $O(3)$ satisfies both (\ref{(A_1)}) and (\ref{(A_2)}) while admitting a surjective homomorphism $ \phi : \Ga \to \mathbb{Z}_2$. This obstacle prevents the use of the same variational techniques for $N=3$. 

Inspired by \cite{MR1896096}, which deals with non-radial sign-changing solutions of Schr\"odinger-Poisson systems in $\R^3$, we adopt a new symmetry approach to overcome this challenge in the hyperbolic space setting and prove the \cref{Existence thm hyperbolic}. Furthermore, we use the same symmetry to showcase the existence of a bi-radial sign-changing solution to (\ref{HSM equation}), as in \cref{Existence thm HSM}. Also, we observe that the solutions to (\ref{(1)}) established in  \cite{MannaManna+2025} and the solutions in \cref{Existence thm HSM} exhibit different kinds of symmetry, which in turn give the following multiplicity theorem for (\ref{(1)}):\\

\begin{theorem}\label{Multiplicity thm hyperbolic}
	For $N \geq 4, \la < \f{(N-1)^2}{4}, 1 < p < 2^* -1$, (\ref{(1)}) admits at least two non-radial sign-changing solutions.
\end{theorem}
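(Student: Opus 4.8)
The plan is to combine the two distinct existence results already available: the non-radial sign-changing solutions produced in \cite{MannaManna+2025} via closed subgroups $\Ga \subset O(N)$ satisfying \eqref{(A_1)} and \eqref{(A_2)}, and the sign-changing solution $u$ with $u(gx) = -u(x)$ for $g \in G$ provided by \cref{Existence thm hyperbolic}. The key observation is that these two families of solutions possess genuinely incompatible symmetry types, so for $N \geq 4$ one obtains at least two geometrically distinct non-radial sign-changing solutions. First I would recall that for $N = 4$ and $N \geq 6$ the results of \cite{MannaManna+2025} yield infinitely many non-radial sign-changing solutions in the equivariant space $H^1(\bn)^\phi$, while for $N = 5$ they yield at least one such solution; in all cases $N \geq 4$ we therefore have at least one solution $u_1$ lying in $H^1(\bn)^\phi$ for the relevant group $\Ga$ and homomorphism $\phi$. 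Second, \cref{Existence thm hyperbolic} (which holds for all $N \geq 3$, in particular $N \geq 4$) furnishes a solution $u_2$ that is $G$-antisymmetric in the sense $u_2(gx) = -u_2(x)$ for all $g \in G$.

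The central step is to verify that $u_1$ and $u_2$ cannot coincide (nor be related by a hyperbolic isometry in a way that would make them count as the same solution). For this I would examine their fixed-point / nodal structure. The group $G$ in \eqref{isom.grp} contains the reflection $\mathrm{diag}(I_{N-1}, -1)$, and the antisymmetry condition $u_2(gx) = -u_2(x)$ forces $u_2$ to vanish on the fixed-point set of that reflection — a totally geodesic hypersurface $\bn \cap \{x_N = 0\}$ of codimension one. On the other hand, the solutions $u_1 \in H^1(\bn)^\phi$ constructed in \cite{MannaManna+2025} are forced to vanish only on the (typically higher-codimension) fixed-point sets dictated by $\ker \phi \subset \Ga$, and by the choice of $\Ga$ there (subgroups of $O(N)$ adapted to splittings like $\R^{N} = \R^{m} \times \R^{m}$ or using block-diagonal orthogonal actions), the nodal set of $u_1$ has a different dimension/geometry than a single totally geodesic hypersurface through the origin. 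Comparing the Hausdorff dimension of the forced nodal sets, or comparing the isotropy representations on these zero sets, shows $u_1 \neq u_2$ up to isometry; concretely, $u_1$ is invariant under a continuous (positive-dimensional) subgroup of $\mathrm{Iso}(\bn)$ by \eqref{(A_2)}, whereas $u_2$ need not be, and even after applying any isometry the antisymmetry type (a $\mathbb{Z}_2$-action with a codimension-one fixed hypersurface) is preserved and differs from the symmetry type of $u_1$.

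The main obstacle I anticipate is making the ``different symmetry $\Rightarrow$ different solution'' argument airtight rather than heuristic: one must rule out the possibility that some solution happens to enjoy \emph{both} symmetry types simultaneously, which would collapse the count. I would handle this by a clean invariant — for instance, the structure of the maximal symmetry group, or more robustly the codimension of the minimal forced nodal set — and check that the $\Ga$ used in \cite{MannaManna+2025} for each $N \geq 4$ is chosen so that no solution in $H^1(\bn)^\phi$ can also be $G$-antisymmetric (equivalently, no hyperbolic isometry conjugates the relevant $\mathbb{Z}_2$-quotient of $\Ga$ onto $G$ with its codimension-one fixed set). Finally, I would assemble the statement: for each $N \geq 4$ we exhibit $u_1$ (from \cite{MannaManna+2025}) and $u_2$ (from \cref{Existence thm hyperbolic}), both non-radial sign-changing solutions of \eqref{(1)}, and the symmetry/nodal comparison shows $u_1 \not\equiv u_2$ up to isometry, proving \cref{Multiplicity thm hyperbolic}.
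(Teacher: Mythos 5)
Your overall strategy is the same as the paper's: take one solution $u_1 \in H^1(\bn)^\phi$ from \cite{MannaManna+2025} and one solution $u_2 \in \widetilde{H^1\(\bn\)}$ from \cref{Existence thm hyperbolic}, and argue that the incompatible symmetry constraints force $u_1 \neq u_2$. However, the step you yourself flag as the main obstacle --- ruling out that a single nonzero function carries both symmetry types --- is exactly the content the paper supplies and your proposal leaves unproved. Your suggested invariants do not close this gap as stated: comparing the \emph{forced} nodal sets only bounds the zero set from below (the actual nodal set of $u_1$ could a priori contain a codimension-one totally geodesic hypersurface as well), and the observation that $u_2$ ``need not be'' invariant under a positive-dimensional subgroup is not a proof that it is not. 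Moreover, you aim at distinctness ``up to isometry,'' which is stronger than what the theorem asserts and would require controlling all conjugations of the relevant symmetry groups inside $\mathrm{Iso}(\bn)$ --- none of which you carry out.

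The paper replaces all of this with the short algebraic \cref{zero intersection of two subspaces}: $\widetilde{H^1\(\bn\)} \cap H^1(\bn)^\phi = \{0\}$. Concretely, for $N=4$ and $N \geq 6$ one exhibits an element $g_1$ lying both in $G$ and in $\ker\phi \subset \Ga$, so that any $u$ in the intersection satisfies $u(g_1 x) = u(x)$ and $u(g_1 x) = -u(x)$ simultaneously, whence $u \equiv 0$; for $N=5$ a composition of explicit elements of $\Ga$ and $G$ yields the same identity $u = -u$. Since $u_1$ and $u_2$ are both nontrivial and lie in these two subspaces, they cannot coincide as functions, which is all the theorem requires. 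If you replace the nodal-set and maximal-symmetry-group heuristics with this direct computation (which does require checking the specific groups $\Ga$ and homomorphisms $\phi$ used in \cite{MannaManna+2025} for each $N \geq 4$), your argument becomes complete.
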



\subsection{Notations and Preliminaries}


%
%

The Poincar\'e ball model of hyperbolic space, denoted by $\bn$, is defined as $B(0,1)\subset\rn$ equipped with the Riemannian metric:
$\(g_{\bn}\)_{ij} = \(\frac{2}{1 - |x|^2}\)^{2} \delta_{i j}.$ Another commonly used model for hyperbolic space is the upper half space model, denoted $\hn$, which consists of the upper half space $\rn_+ = \{\(x_1, \cdots, x_N\) \in \rn : x_N > 0\}$, endowed with the Riemannian metric $\(g_{\hn}\)_{ij} = \(\f{1}{x_N}\)^2 \de_{ij}.$

An isometry $M : \bn \to \hn$ between these two models is given by 
\begin{align}
	M \(x\) = M \((x', x_N)\)= \(\f{2x'}{\abs{x'}^2 + \(1 + x_N\)^2}, \f{1-\abs{x}^2}{\abs{x'}^2 + \(1 + x_N\)^2} \), \label{Isometry between two models}
\end{align}
with the property $M = M^{-1}$. We denote the hyperbolic distance between two points  $x,y \in \bn$ as $d_{\bn}(x,y)$ and has the form
\begin{align}
	d_{\bn}(x,y) & := \cosh^{-1} \( 1 + \f{2|x-y|^2}{\(1-|x|^2\)\(1-|y|^2\)}\). \label{hyperbolic metric}
\end{align}
We denote by $B \(x,r\) := \{ y \in \bn : d_{\bn}(x,y) < r\}$, the hyperbolic ball centered at $x \in \bn$ with radius $r > 0$, and the corresponding hyperbolic sphere by $S \(x,r\) := \{ y \in \bn : d_{\bn}(x,y) = r\}.$ Here we remark that, if the center is at the origin, the hyperbolic and Euclidean spheres coincide, i.e., $S \(0,r\) = S_E \(0, \tanh \(\f{r}{2}\)\),$  where $S_E \(0, \tanh \(\f{r}{2}\)\)$ denotes a Euclidean sphere of radius $\tanh \(\f{r}{2}\)$ centered at the origin.
\vspace*{0.2cm}
\newline
\textbf{Exponential map on $\bn$:} Given that hyperbolic space is a complete manifold, it follows from the Hopf-Rinow theorem that $\bn$ is geodesically complete. Consequently, the exponential map at the origin $\mathrm{exp}_0: T_0(\bn) \to \bn$ is well defined and expressed as
\begin{align}
	\mathrm{exp}_0(z) & = \f{\sinh \(2|z|\)}{1+ \cosh \(2|z|\)} \f{z}{|z|},  \quad \forall z \in T_0 \(\bn\), \label{exponential map}
\end{align}
where $|z|$ is the Euclidean norm in $\rn$. Since the injectivity radius of $\bn$ at the origin is infinite, $\mathrm{exp}_0$ is a diffeomorphism from the tangent space $T_0\(\bn\)$ onto $\bn$. Furthermore, for any $r>0$, the map $\mathrm{exp}_0 : B_{T_0\(\bn\)} \(z,r\)  \to B\(\ex(z),r\)$ is also a diffeomorphism, where $B_{T_0\(\bn\)} \(z,r\)$ refers to a ball with center at $0$ and radius $r$ inside the tangent space $T_0\(\bn\)$. The tangent space $T_0\(\bn\)$ can be identified as the Euclidean space $\rn$, and the metric in tangent space is the same as Euclidean distance. For convenience, we denote $B_{T_0\(\bn\)} \(z,r\)$ as $B_E(z,r)$. We also use the following notations:
\begin{align*}
	\overline{B\(\ex(z),r\)} & = \ex\(\overline{B_E(z,r)}\), \quad \text{for}\; r>0.\\
	A_E \(z; r_2,r_2\) & = B_E(z,r_1) \setminus \overline{B_E(z,r_2)}, \quad \text{for} \; r_1 > r_2 > 0.\\
	A\(\ex(z); r_2,r_1\) & = \ex\(A_E \(z; r_2,r_1\)\), \quad \text{for} \; r_1 > r_2 > 0.
\end{align*}

Here, we state the change of variable formula for the exponential map at $0 \in T_0\(\bn\)$. The proof can be found in the appendix of this article.\\

\begin{lemma}\label{Change of variable}
	Let $\Omega$ be an open subset of $\bn$ and $u : \bn \to \R$ be a measurable function. Then
	\begin{align}
		\int_{\Omega} u \dv & = \int_{\ex^{-1}\(\Omega\)} u \(\ex (z)\) \cdot \Upsilon(z) \mathrm{~d}z \label{COV} 
	\end{align}
	assuming both the integrals have finite values and
	\begin{align*}
		\Upsilon(z) & = 2 \[\f{\sinh(2|z|)}{|z|}\]^{N-1}.
	\end{align*}
\end{lemma}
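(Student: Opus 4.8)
The plan is to deduce \eqref{COV} from the classical change-of-variables theorem for $C^1$-diffeomorphisms of open subsets of $\rn$, applied to the map $\ex : T_0(\bn)\cong\rn\to\bn$, and then to identify the resulting integrand density with $\Upsilon$ by a direct Jacobian computation. First I would write the Riemannian volume element of the Poincar\'e ball model as $\dv = \(\f{2}{1-|x|^2}\)^N\,\mathrm{d}x$ (since the metric is conformal to the Euclidean one with factor $\(\f{2}{1-|x|^2}\)^2$), so that $\int_\Omega u\dv = \int_\Omega u(x)\(\f{2}{1-|x|^2}\)^N\,\mathrm{d}x$. As recalled above, $\ex$ is a diffeomorphism of $\rn$ onto $\bn$; hence the change-of-variables theorem — valid for any non-negative measurable integrand, and then for general $u$ by treating $u^+$ and $u^-$ separately, the hypothesis that both sides of \eqref{COV} are finite ruling out an $\infty-\infty$ — gives
\begin{align*}
	\int_\Omega u(x)\(\f{2}{1-|x|^2}\)^N\,\mathrm{d}x = \int_{\ex^{-1}(\Omega)} u(\ex(z))\(\f{2}{1-|\ex(z)|^2}\)^N\,\abs{\det D\ex(z)}\,\mathrm{d}z .
\end{align*}
So the whole task reduces to showing $\(\f{2}{1-|\ex(z)|^2}\)^N\abs{\det D\ex(z)} = \Upsilon(z)$.

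For the Jacobian, I would use the half-angle identity $\f{\sinh(2t)}{1+\cosh(2t)}=\tanh t$ to rewrite $\ex(z)=\tanh(|z|)\,\f{z}{|z|}$; thus $\ex$ is the radial map $z\mapsto f(|z|)\,z/|z|$ with $f(\rho)=\tanh\rho$. For any radial map of this form, the differential at $z=\rho\omega$ ($|\omega|=1$) is diagonal with respect to the orthogonal splitting $\rn=\R\omega\oplus\omega^\perp$, with eigenvalue $f'(\rho)$ on $\R\omega$ and eigenvalue $f(\rho)/\rho$ of multiplicity $N-1$ on $\omega^\perp$; hence $\abs{\det D\ex(z)} = f'(\rho)\(\f{f(\rho)}{\rho}\)^{N-1} = \f{1}{\cosh^2\rho}\(\f{\tanh\rho}{\rho}\)^{N-1}$, where $\rho=|z|$. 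Since $1-|\ex(z)|^2 = 1-\tanh^2\rho = \cosh^{-2}\rho$, we have $\(\f{2}{1-|\ex(z)|^2}\)^N = 2^N\cosh^{2N}\rho$, and multiplying the two quantities and simplifying with $\sinh\rho\cosh\rho=\tfrac12\sinh(2\rho)$ yields
\begin{align*}
	2^N\cosh^{2N}\rho\cdot\f{1}{\cosh^2\rho}\cdot\f{\tanh^{N-1}\rho}{\rho^{N-1}} = 2^N\(\f{\sinh\rho\cosh\rho}{\rho}\)^{N-1} = 2\(\f{\sinh(2\rho)}{\rho}\)^{N-1} = \Upsilon(z),
\end{align*}
which is exactly the content of \eqref{COV}.

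There is no essential difficulty here — the computation is elementary — but two points deserve a word of care, and they are where I expect the only friction. The first is the identification of $T_0(\bn)$ with $\rn$ equipped with the \emph{Euclidean} norm: under it the unit-speed hyperbolic geodesic from the origin in direction $\omega$ passes through $\ex(z)$, a point of Euclidean radius $\tanh(|z|)$, at hyperbolic distance $2|z|$, which is exactly what makes the formula $\ex(z)=\tanh(|z|)\,z/|z|$ consistent with the identity $S(0,r)=S_E(0,\tanh(r/2))$ recorded earlier; getting this scaling right is what determines the factor $2$ inside $\sinh(2|z|)$. The second is the reduction from nice integrands to arbitrary measurable $u$, for which one invokes the measurable form of the change-of-variables theorem on $u^\pm$, the finiteness hypothesis of the lemma guaranteeing that no cancellation of infinities occurs. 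Alternatively, the whole Jacobian computation can be bypassed by using the standard expression $\dv=(\sinh r)^{N-1}\,\mathrm{d}r\,\mathrm{d}\omega$ for the volume element of the curvature $-1$ model in geodesic polar coordinates about the origin, together with $r=2|z|$ and $\mathrm{d}z=|z|^{N-1}\,\mathrm{d}|z|\,\mathrm{d}\omega$, which recovers $\Upsilon$ after the same short simplification.
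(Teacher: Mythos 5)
Your proof is correct and follows the same overall scheme as the paper's: express $\dv$ through the conformal density $\(\f{2}{1-|x|^2}\)^N$, apply the Euclidean change-of-variables theorem to the diffeomorphism $\ex$, and identify the product of that density with $\abs{\det D\ex(z)}$ as $\Upsilon(z)$. The only genuine divergence is in the Jacobian computation: the paper differentiates $\al(z)z_i/|z|$ entrywise, writes the differential as $B I+(A-C)zz^{T}$, and evaluates the determinant via Sylvester's identity for a rank-one update, whereas you first simplify $\ex(z)=\tanh(|z|)\,z/|z|$ by the half-angle identity and read off the determinant from the eigenvalues $f'(\rho)$ and $f(\rho)/\rho$ of the differential of a radial map. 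The two computations are algebraically equivalent (your spectral decomposition is precisely what underlies the rank-one determinant formula), but yours is shorter, makes transparent why the factor $2$ appears inside $\sinh(2|z|)$ given the normalization $S(0,r)=S_E\(0,\tanh(r/2)\)$, and your closing remark on geodesic polar coordinates $\dv=(\sinh r)^{N-1}\,\mathrm{d}r\,\mathrm{d}\omega$ with $r=2|z|$ gives an even quicker route that the paper does not use. Your handling of measurability and of the finiteness hypothesis via $u^{\pm}$ is also consistent with what the lemma asserts.
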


\textbf{Hyperbolic translation:} For any $b \in \bn$, the hyperbolic translation $\tau_b : \bn \rightarrow \bn$ is defined by
\begin{align}
	\tau_b(x) &= \f{\(1-|b|^2\)x + \(|x|^2 + 2x \cdot b + 1\)b} {|b|^2|x|^2 + 2 x \cdot b + 1}. \label{hyperbolic translation}
\end{align}
It acts as a translation along the line $\(- \f{b}{|b|}, \f{b}{|b|}\)$. Also, we have $\tau_b(0) = b, \, \forall\, b \in \bn$. For further details on this topic, we advise referring to the book \cite{MR4221225}.
 
In this manuscript, we will denote the  gradient vector field and the Laplace-Beltrami operator on $\bn$ by $\nabla_{\bn} $ and $\De_{\bn}$ respectively. Also, the hyperbolic volume element by $\mathrm{~d}V_{\bn}$. Define the Sobolev space
\begin{align*}
	H^1(\bn) & := \{u \in L^2 (\bn) \;:\; \int_{\bn} \abs{\nabla_{\bn} u}^2 \mathrm{~d} V_{\bn} < \infty\},
\end{align*}
with the norm  
\begin{align*}
	\norm{u}_\la := \[\int_{\bn} \[\abs{\nabla_{\bn} u}^2 - \la u^2\] \mathrm{~d} V_{\bn}\]^{\f{1}{2}},
\end{align*}
 for any $\la < \f{(N-1)^2}{4}$. This is an equivalent norm on $H^1\(\bn\)$, and let us denote the corresponding inner product by  $\< \cdot, \cdot \>_\la$. Next, we present a lemma regarding hyperbolic translations. The proof can be found in \cite{MR3495548}.\\
\begin{lemma} \label{change of variable for translation}
	Let $u, v \in H^1(\bn)$. Then:
	\begin{align*}
		(i) \;& \int_{\bn} \< \nabla_{\bn} \(u \circ \tau_b\)\,,\, \nabla_{\bn}\(v \circ \tau_b\) \>_{\bn} \mathrm{~d} V_{\bn} = \int_{\bn}  \<\(\nabla_{\bn} u\) \circ \tau_b \,,\, \(\nabla_{\bn} v\) \circ \tau_b\>_{\bn} \mathrm{~d} V_{\bn}.\\
		(ii)\; & \int_{\bn} \(u \circ \tau_b\)(x) v(x) \mathrm{~d} V_{\bn} = \int_{\bn} u(x) \(v \circ \tau_{-b}\)(x) \mathrm{~d} V_{\bn}.
	\end{align*}
	Moreover, for any open subset $\Omega$ of $\bn$ and a measurable function $u$, we obtain
	\begin{align*}
		\int_{\Omega} \abs{u \circ \tau_b}^p \mathrm{~d} V_{\bn} & = \int_{\tau_b\(\Omega\)} \abs{u}^p \mathrm{~d} V_{\bn}, \quad 1 \leq p < \infty,
	\end{align*}
	provided the integrals are finite.
\end{lemma}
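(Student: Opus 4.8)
The plan is to reduce every assertion in the lemma to the single geometric fact that each hyperbolic translation $\tau_b$ defined in \ef{hyperbolic translation} is an isometry of the Poincar\'e ball model $\(\bn, g_{\bn}\)$. This is standard and can be found in \cite{MR4221225}; alternatively, it can be checked directly by computing the pullback metric $\tau_b^* g_{\bn}$ from the explicit formula \ef{hyperbolic translation} and verifying that $\tau_b^* g_{\bn} = g_{\bn}$. Two consequences will be used throughout: first, $\tau_b$ preserves the Riemannian volume element, so that $\tau_b^*\(\dv\) = \dv$; second, the differential $d\tau_b$ is a fibrewise linear isometry, i.e. $\<d\tau_b(X), d\tau_b(Y)\>_{\bn} = \<X, Y\>_{\bn}$. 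I will also record that $\tau_b^{-1} = \tau_{-b}$, which follows from $\tau_b(0) = b$ together with the fact that $\tau_b$ and $\tau_{-b}$ translate along the same geodesic line, and which can be verified directly from \ef{hyperbolic translation}.

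For part $(i)$ I would first establish a pointwise chain-rule identity. Since $\tau_b$ is a smooth diffeomorphism, for $u \in C^\infty\(\bn\)$ the gradient of the composition satisfies $\nabla_{\bn}\(u \circ \tau_b\)(x) = \(d\tau_b\)_x^{\,\dagger}\[\(\nabla_{\bn} u\)\(\tau_b(x)\)\]$, where $\dagger$ denotes the metric adjoint of $d\tau_b$. Because $d\tau_b$ is a linear isometry, so is its adjoint, and hence
\begin{align*}
	\<\nabla_{\bn}\(u\circ\tau_b\), \nabla_{\bn}\(v\circ\tau_b\)\>_{\bn}(x) = \<\(\nabla_{\bn} u\)\(\tau_b(x)\), \(\nabla_{\bn} v\)\(\tau_b(x)\)\>_{\bn}
\end{align*}
for every $x \in \bn$; that is, the two integrands in $(i)$ coincide pointwise, and integrating against $\dv$ gives the claim without any change of variables. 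To pass from $C^\infty$ to general $u, v \in H^1\(\bn\)$, I would use that $u \mapsto u \circ \tau_b$ is a bounded linear operator on $H^1\(\bn\)$ (again because $\tau_b$ is a smooth isometry), together with the density of smooth functions in $H^1\(\bn\)$, and conclude by continuity of both sides.

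Parts $(ii)$ and the final $L^p$ identity are pure change-of-variables statements. For $(ii)$ I would substitute $y = \tau_b(x)$, so that $x = \tau_{-b}(y)$ and $\dv(x) = \dv(y)$ by volume invariance; the left-hand integrand $\(u\circ\tau_b\)(x)\,v(x)$ becomes $u(y)\,v\(\tau_{-b}(y)\) = u(y)\,\(v\circ\tau_{-b}\)(y)$, which is exactly the right-hand side. The final identity is the same substitution applied to $\abs{u\circ\tau_b}^p$: as $x$ ranges over $\Omega$, the point $y = \tau_b(x)$ ranges over $\tau_b\(\Omega\)$, and $\abs{u\(\tau_b(x)\)}^p\,\dv(x) = \abs{u(y)}^p\,\dv(y)$ transports the domain from $\Omega$ to $\tau_b\(\Omega\)$. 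I do not anticipate a genuine obstacle: the only substantive input is the isometry property of $\tau_b$, and once that is in hand (by citation or by the direct metric computation), every displayed equality is either a pointwise consequence of $d\tau_b$ preserving the metric or a volume-preserving change of variables. The one point requiring mild care is the density/continuity argument in $(i)$, ensuring that the smooth-function identity survives the passage to $H^1\(\bn\)$.
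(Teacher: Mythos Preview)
Your argument is correct and is the standard way these identities are established. Note, however, that the paper does not actually prove this lemma: it simply states the result and refers the reader to \cite{MR3495548} for the proof. Your write-up therefore supplies what the paper omits, and does so along the expected lines --- everything reduces to $\tau_b$ being an orientation-preserving isometry of $\(\bn, g_{\bn}\)$ with $\tau_b^{-1} = \tau_{-b}$, after which $(i)$ is a pointwise chain-rule identity and $(ii)$ together with the $L^p$ statement are volume-preserving changes of variable.
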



\subsection{An admissible subspace and variational framework}


In this section we shall introduce an equivariant subspace of sign-changing functions and look for solutions of \ef{(1)} in this equivariant space. To achieve this, let us first consider the set of block diagonal matrices $G$ as in (\ref{isom.grp}). Then we can easily see $G \subset O(N)$ is not a subgroup but has the following important properties:
	\begin{enumerate}
		\item Each $g \in G$ preserves the hyperbolic distance, defined in (\ref{hyperbolic metric}), i.e.,
		\begin{align*}
			d_{\bn} \(x,y\) = d_{\bn} \(gx,gy\), \, \forall x,y \in \bn, g \in G.
		\end{align*} 
		\item For each $g = \begin{bmatrix}
			A & 0\\
			0 & -1
		\end{bmatrix} \in G$, there exists $g' = \begin{bmatrix}
		A^{-1} & 0\\
		0 & -1
		\end{bmatrix} \in G$ such that $g g' = g'g = \mathrm{Id}_N$, where $ \mathrm{Id}_N$ denotes the $N \times N$ real identity matrix.
	\end{enumerate}

\vspace*{4mm}
Now let us define a map $T_g : H^{1}(\bn) \to H^{1}(\bn)$ such that
\begin{align}
	\(T_g u\) (x) & := - u \(g x\), \quad \forall g \in G. \label{the map T_g}
\end{align}
Then, for $g, g' \in G$, $g g' = g'g = \mathrm{Id}_N$ implies $T_g T_{g'} = T_{g'} T_g =  \mathrm{Id}$, where $\mathrm{Id}$ is the identity map on $H^{1}(\bn)$.

\begin{lemma}
	For each $g \in G$, $T_g$ preserves the inner product in $H^1\(\bn\)$.
\end{lemma}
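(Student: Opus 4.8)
The plan is to reduce everything to the single observation that, although $G$ is not a subgroup of $O(N)$, each individual $g \in G$ is a hyperbolic isometry of $\bn$. First I would record this: writing $g = \begin{bmatrix} A & 0 \\ 0 & -1\end{bmatrix}$ with $A \in O(N-1)$ shows $g \in O(N)$, hence $\abs{gx} = \abs{x}$ for every $x \in \bn$; therefore both the conformal factor $\(\f{2}{1-\abs{x}^2}\)^{2}$ and the Euclidean metric $\delta_{ij}$ are invariant under $x\mapsto gx$, so $g$ pulls the metric $g_{\bn}$ back to itself, i.e. $g$ is an isometry with $g(\bn)=\bn$. Two consequences are all I need: the hyperbolic volume element $\mathrm{~d}V_{\bn}$ is $g$-invariant (the Euclidean Jacobian is $\abs{\det g}=1$), and the pointwise inner product of gradients transforms correctly, $\<\nabla_{\bn}(u\circ g), \nabla_{\bn}(v\circ g)\>_{\bn}(x) = \<(\nabla_{\bn} u)\circ g,(\nabla_{\bn} v)\circ g\>_{\bn}(x)$, which is the infinitesimal form of $g$ being an isometry.

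Next I would compute $\<T_g u, T_g v\>_\la$ directly. The two minus signs in $T_g$ cancel, so $(T_g u)(x)(T_g v)(x) = u(gx)v(gx)$, and hence
\begin{align*}
	\<T_g u, T_g v\>_\la = \int_{\bn}\[\<\nabla_{\bn}(u\circ g),\nabla_{\bn}(v\circ g)\>_{\bn} - \la\,(u\circ g)(v\circ g)\]\dv.
\end{align*}
Applying the gradient transformation rule above and then the change of variables $y=gx$ — legitimate because $g$ is a volume-preserving diffeomorphism of $\bn$ — turns the right-hand side into $\int_{\bn}\[\<\nabla_{\bn} u,\nabla_{\bn} v\>_{\bn} - \la\, uv\]\dv = \<u,v\>_\la$. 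Taking $u=v$ first gives $\norm{T_g u}_\la = \norm{u}_\la$, which in particular shows that $T_g$ genuinely maps $H^1(\bn)$ into itself, so it is well defined; combined with $T_gT_{g'}=T_{g'}T_g=\mathrm{Id}$ (already noted for $gg'=g'g=\mathrm{Id}_N$), this makes each $T_g$ a linear isometric automorphism of $H^1(\bn)$. For general $u,v\in H^1(\bn)$ I would carry out the computation first for $u,v\in C_c^\infty(\bn)$ and then pass to the limit using density and the continuity of the bilinear form $\<\cdot,\cdot\>_\la$.

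I do not expect a genuine obstacle here; the computation is routine once $g$ is recognized as an isometry. The one point deserving care is exactly that $G$ is not a group, so the statement cannot be obtained from a general ``isometric group action preserves the norm'' principle and must instead be checked for each $g$ separately — which is precisely what the argument above does. The transformation law for the Dirichlet integrand is the standard invariance of the Dirichlet energy under Riemannian isometries, and if an elementary derivation is preferred it can be recovered from \cref{Change of variable} together with the chain rule.
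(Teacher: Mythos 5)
Your argument is correct and is exactly the paper's (very terse) proof spelled out in detail: both rest on the observation that each $g\in G$ lies in $O(N)$ and hence is a hyperbolic isometry of $\bn$, so the volume form and the gradient inner product are invariant and the two minus signs in $T_g$ cancel. No further comment is needed.
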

\begin{proof}
	The proof follows easily from the fact that the volume form $\dv$ and the gradient field $\nabla_{\bn}$ are invariant under the actions of the elements of $G$.
\end{proof}
Now let us define a subspace of $H^1\(\bn\)$ to be
\begin{align}
	\widetilde{H^1\(\bn\)} & := \{u \in H^1\(\bn\) \; : \; T_g u = u, \;\; \forall g \in G\} \notag \\
	& = \{u \in H^1\(\bn\) \; : \; u(g x) = - u(x), \;\; \forall g \in G, x \in \bn \}. \label{definition of subspace}
\end{align}
Also, from the definition of exponential map (\ref{exponential map}), it is easy to observe that
\begin{align}
	\ex(gz) = g \, \ex(z), \quad \forall g \in G, z \in T_0 \(\bn\) \cong \R^N. \label{interaction between g and exponential}
\end{align}
Therefore, we can redefine $\widetilde{H^1\(\bn\)}$ to be
\begin{align}
	\widetilde{H^1\(\bn\)} = \{u \in H^1(\bn) \; : \; u\(\ex(gz)\) = - u\(\ex(z)\), \quad \forall g \in G, z \in \R^N\}. \label{equivalent definition}
\end{align}
\begin{remark}
	Let $x = \(x_1, \cdots, x_N\) \in \bn$. Then, for any $u \in \widetilde{H^1\(\bn\)}$,
	\begin{align*}
		u(x_1, \cdots, x_{N-1}, - x_N) & = - u (x_1, \cdots, x_{N-1}, x_N)
	\end{align*}
	Since $S \(0,r\)  = S_E \(0, \tanh \(\f{r}{2}\)\),$ any nontrivial function $u \in \widetilde{H^1\(\bn\)}$ is non-radial and also sign changing. Furthermore, it is easy to observe that $\widetilde{H^1\(\bn\)}$ is a non-trivial subspace of $H^1\(\bn\)$.
\end{remark}

\vspace*{4mm}
Now let $\{u_n\} \subset \widetilde{H^1(\bn)}$ such that $u_n \to u$ in $H^1\(\bn\)$. Then $u_n \to u$, and $\nabla_{\bn}u_n \to \nabla_{\bn}u$ in $L^2\(\bn\)$ as $n \to \infty$. Now $u_n\in \widetilde{H^1\(\bn\)}$ implies $u_n(gx) = -u_n(x), \; \forall g \in G$. Then, using the dominated convergence theorem, we get $u(gx)=-u(x), \; \forall g \in G$. And we have the following:
%
%
%
%
\begin{lemma}
	 $\widetilde{H^1\(\bn\)}$ is a closed subspace of $H^1\(\bn\)$.
\end{lemma}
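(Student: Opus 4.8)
The plan is to prove the statement in two stages. First I would check that $\widetilde{H^1(\bn)}$ is a linear subspace, which is immediate: the defining relation $T_g u = u$ for all $g\in G$ is preserved under sums and scalar multiples because each $T_g$ is a linear map on $H^1(\bn)$. The content of the lemma is therefore the closedness, and for that I would essentially expand the argument already sketched in the paragraph preceding the statement.

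Concretely, let $\{u_n\}\subset\widetilde{H^1(\bn)}$ with $u_n\to u$ in $H^1(\bn)$. Then $u_n\to u$ in $L^2(\bn)$, so a subsequence $\{u_{n_k}\}$ converges to $u$ pointwise a.e.\ on $\bn$. Fix $g\in G$. By the first property of $G$ recorded above, $x\mapsto gx$ is a hyperbolic isometry, hence preserves the hyperbolic volume and in particular maps null sets to null sets; consequently $u_{n_k}(gx)\to u(gx)$ for a.e.\ $x$, while also $u_{n_k}(x)\to u(x)$ for a.e.\ $x$. Passing to the limit in the identity $u_{n_k}(gx)=-u_{n_k}(x)$ gives $u(gx)=-u(x)$ for a.e.\ $x$, i.e.\ $T_g u=u$. (If one prefers the dominated convergence formulation, one additionally extracts from $\{u_{n_k}\}$ an $L^2$-dominating function, but the a.e.\ convergence alone suffices to pass to the limit in a pointwise identity.)

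The only point that needs a little care is that, run naively, the exceptional null set depends on $g$, whereas membership in $\widetilde{H^1(\bn)}$ demands the relation for every $g\in G$ simultaneously. I would dispose of this by the operator-theoretic reformulation
\begin{align*}
\widetilde{H^1(\bn)} \;=\; \bigcap_{g\in G}\ker\(T_g-\mathrm{Id}\):
\end{align*}
by the previous lemma each $T_g$ is a linear isometry of $H^1(\bn)$, hence bounded, so every $\ker(T_g-\mathrm{Id})$ is a closed subspace, and an arbitrary intersection of closed subspaces is a closed subspace. (Alternatively one can fix a countable dense subset of $G$, using separability of $O(N-1)$, run the a.e.\ argument there to produce one common null set, and extend to all $g\in G$; but the intersection-of-kernels argument is cleaner and I would present that one.) There is no real obstacle here: once continuity of the $T_g$ is in hand — already established — the closedness is essentially formal, and the a.e.\ argument is only needed if one wants to argue directly from the pointwise definition \eqref{definition of subspace}.
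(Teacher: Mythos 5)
Your proposal is correct. The paper itself disposes of the lemma with the sketch in the preceding paragraph: take $u_n\to u$ in $H^1\(\bn\)$, note $u_n\to u$ in $L^2\(\bn\)$, and pass to the limit in $u_n(gx)=-u_n(x)$ via dominated convergence (for each fixed $g$ this is just the statement that $u\circ g=-u$ as elements of $L^2$, since $g$ preserves the volume). Your first paragraph reproduces exactly this, so on that route you match the paper. Where you differ is in promoting the operator-theoretic reformulation $\widetilde{H^1\(\bn\)}=\bigcap_{g\in G}\ker\(T_g-\mathrm{Id}\)$ to the main argument: since the preceding lemma shows each $T_g$ preserves the inner product, $T_g-\mathrm{Id}$ is a bounded linear map, each kernel is closed, and an arbitrary intersection of closed subspaces is closed. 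This is cleaner and entirely formal, and it also neatly sidesteps the point you raise about the exceptional null set depending on $g$ — though that worry is in fact harmless here, because membership in $\widetilde{H^1\(\bn\)}$ is the condition $T_g u=u$ as an identity of equivalence classes in $H^1\(\bn\)$ for each $g$ separately, which is precisely the intersection of the kernels; no single common null set is required. Both routes are valid; yours makes the (trivial) linear-subspace verification explicit, which the paper leaves implicit.
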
\;

The following result establishes the relationship between the hyperbolic translations in the $N$th direction and the action of $G$.

\begin{lemma}\label{relationship between translation and action}
	Let $b = \(0, \cdots, 0, b_N\) \in \bn$, then
	\begin{align*}
		u \circ \tau_b (gx) = - u \circ \tau_{-b} (x), \quad \forall x \in \bn, u \in \widetilde{H^1\(\bn\)}.
	\end{align*} 
\end{lemma}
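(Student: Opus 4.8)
\textbf{Proof proposal for Lemma \ref{relationship between translation and action}.}

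The plan is to reduce everything to the explicit formula for the hyperbolic translation \eqref{hyperbolic translation} and the defining symmetry of $\widetilde{H^1(\bn)}$. Fix $b = (0,\dots,0,b_N) \in \bn$ and $g = \begin{bmatrix} A & 0 \\ 0 & -1 \end{bmatrix} \in G$ with $A \in O(N-1)$. The key structural observation is that $gb = -b$, since $A$ acts trivially on the zero vector $(0,\dots,0)$ and the last coordinate is negated. With this in hand, I would compute $\tau_b(gx)$ directly from \eqref{hyperbolic translation}: writing $y = gx$, note that $|y| = |x|$ (as $g \in O(N)$) and $y \cdot b = (gx) \cdot b = x \cdot (g^{\top} b) = x \cdot (g^{-1} b)$; since $g^{-1} = \begin{bmatrix} A^{-1} & 0 \\ 0 & -1 \end{bmatrix}$ and $b$ has only a last coordinate, $g^{-1} b = -b$, so $y \cdot b = -\,x \cdot b$. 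Substituting $|y|^2 = |x|^2$ and $y \cdot b = -x\cdot b$ into the denominator of \eqref{hyperbolic translation} turns $|b|^2|x|^2 + 2x\cdot b + 1$ into $|b|^2|x|^2 - 2x \cdot b + 1$, which is precisely the denominator appearing in $\tau_{-b}(x)$ (replacing $b$ by $-b$ leaves $|b|^2$ and $|x|^2$ unchanged while flipping the sign of the cross term).

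For the numerator, the same substitution gives $(1-|b|^2)(gx) + (|x|^2 - 2x\cdot b + 1)b$. I would then apply $g^{-1}$ to this vector and compare with the numerator of $\tau_{-b}(x) = \dfrac{(1-|b|^2)x + (|x|^2 - 2x\cdot b + 1)(-b)}{|b|^2|x|^2 - 2x\cdot b + 1}$. Since $g^{-1}(gx) = x$ and $g^{-1} b = -b$, linearity of $g^{-1}$ yields $g^{-1}\big[(1-|b|^2)(gx) + (|x|^2 - 2x\cdot b + 1)b\big] = (1-|b|^2)x + (|x|^2 - 2x\cdot b + 1)(-b)$, which is exactly the numerator of $\tau_{-b}(x)$. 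Combining numerator and denominator, this shows $g^{-1}\big(\tau_b(gx)\big) = \tau_{-b}(x)$, equivalently $\tau_b(gx) = g\,\tau_{-b}(x)$.

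Finally I would invoke the defining property of $\widetilde{H^1(\bn)}$: for $u \in \widetilde{H^1(\bn)}$ one has $u(gw) = -u(w)$ for all $w \in \bn$, $g \in G$. Applying this with $w = \tau_{-b}(x)$ and using the identity $\tau_b(gx) = g\,\tau_{-b}(x)$ just established gives
\[
u\circ\tau_b(gx) = u\big(g\,\tau_{-b}(x)\big) = -\,u\big(\tau_{-b}(x)\big) = -\,u\circ\tau_{-b}(x),
\]
which is the claimed identity. The argument is essentially a bookkeeping exercise; the only point requiring a little care is tracking that conjugation of the translation direction $b$ by $g$ produces $-b$ (both in the numerator and denominator of \eqref{hyperbolic translation}), so that $\tau_b$ gets intertwined with $\tau_{-b}$ rather than with $\tau_b$ itself. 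I do not anticipate a genuine obstacle here — the lemma is a compatibility statement between the explicit isometry formulas and the symmetry constraint, and the $O(N-1)$-block structure of $G$ is exactly what makes it work.
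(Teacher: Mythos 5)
Your proof is correct and follows essentially the same route as the paper: both arguments use $|gx|=|x|$, $(gx)\cdot b = -x\cdot b$, and $gb=-b$ (equivalently $b=g(-b)$) to establish the intertwining identity $\tau_b(gx)=g\,\tau_{-b}(x)$ directly from \eqref{hyperbolic translation}, and then apply the defining antisymmetry $u(gw)=-u(w)$ of $\widetilde{H^1\(\bn\)}$.
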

\begin{proof}
	For every $g \in G$, $u \in \widetilde{H^1\(\bn\)}$, and $x = \(x_1, \cdots, x_N\) \in \bn$, we have
	\begin{align*}
		u \circ \tau_b (gx) & = u \(\f{\(1-|b|^2\)(g x) + \(|g x|^2 + 2(g x) \cdot b + 1\)b} {|b|^2|g x|^2 + 2 (g x) \cdot b + 1}\)\\
		& = u \(\f{\(1-|b|^2\)(g x) + \(|x|^2 - 2 x_N \cdot b_N + 1\) \(g\(0, \cdots, 0, - b_N\)\)} {|b|^2| x|^2 - 2 x_N \cdot b_N + 1}\)\\
		& = u \(g \(\f{\(1-|-b|^2\)x + \(|x|^2 + 2x \cdot (-b) + 1\)(-b)} {|-b|^2|x|^2 + 2 x \cdot (-b) + 1}\) \)\\
		& = - u \(\f{\(1-|-b|^2\)x + \(|x|^2 + 2x \cdot (-b) + 1\)(-b)} {|-b|^2|x|^2 + 2 x \cdot (-b) + 1}\)\\
		& = - u \circ \tau_{-b} (x).
	\end{align*}
\end{proof}
Let us now consider the following example: Let $\phi \in \widetilde{H^1\(\bn\)}$ such that $\mathrm{Supp}\, \phi \subset \ex \(R^{N-1} \times (-1,1)\).$ Define a sequence of points in $T_0\(\bn\)$ as $z_n := \(0, \cdots, 0, 2n \)$, and take $x_n = \ex(z_n)$. Then from the \cref{change of variable for translation}, it is easy to observe that
\begin{align*}
	\phi_n:= \phi \circ \tau_{x_n} + \phi \circ \tau_{-x_n} \in H^1\(\bn\), \quad \forall n \in \N.
\end{align*}
Also, because of disjoint supports of $ \phi \circ \tau_{x_n}$ and $ \phi \circ \tau_{-x_n}$ we have $\forall n$, $\norm{\phi_n}_\la^2 = 2 \norm{\phi}_\la^2.$ Now from the \cref{relationship between translation and action}, for every $g \in G$ we have
\begin{align*}
	\phi_n (gx) & = \phi \circ \tau_{x_n} (gx) + \phi \circ \tau_{ - x_n} (gx)
 = - \phi \circ \tau_{ - x_n} (x) - \phi \circ \tau_{x_n} (x)
	 = - \phi_n (x).
\end{align*}
Since all the integrands have disjoint supports, we can easily show that for every $m,n \in N$ with $m \neq n$, we have $\norm{\phi_m - \phi_n}_{L^{p+1}} = 4^{\f{1}{p+1}} \norm{\phi}_{L^{p+1}}$. Which implies $\widetilde{H^1\(\bn\)}$ is not compactly embedded in $L^{p+1} \(\bn\)$ for $1<p<2^* - 1$.\\

\textbf{The Variational Framework:} As (\ref{(1)}) is superlinear, the corresponding energy functional is unbounded from below. So, we intend to use the constrained minimization method to find a solution, which is indeed helpful because of the difference in homogeneity of linear and non-linear terms. Our goal is to find a non-radial sign-changing solution, so we pose the variational problem in the subspace $\widetilde{H^1\(\bn\)}$. Precisely, we want to find a minimizer of the energy functional $\Psi : \widetilde{H^1\(\bn\)} \to \R$ defined as
\begin{align}
	\Psi(u) = \int_{\bn} \[\f{1}{2} \abs{\nabla_{\bn} u}^2 - \f{\la}{2} u^2 \] \mathrm{~d} V_{\bn} = \f{1}{2} \norm{u}_\la^2\label{en.fn}
\end{align}
restricted on the submanifold
\begin{align}
	M = \{u \in \widetilde{H^1\(\bn\)} \; :\; \norm{u}_{L^{p+1}\(\bn\)} = 1\}.\label{neh.set}
\end{align}

However, to prove the existence of a constrained minimizer, we need some form of compactness for the minimizing sequence. As our problem is set up on $\bn$, which is an entire unbounded space, so  we do not get compactness from the Rellich-Kondrakov theorem. Even though we have radial symmetry on the $(x_1, \cdots, x_{N-1})$-hyperplane, the lack of compactness happens through the hyperbolic translations in the $x_N$ direction, as discussed above. To show the existence of a minimizer, we use the concentration compactness principle \cite{MR0778970} by P. L. Lions, and we establish the lack of compactness does not happen by showing neither the minimizing sequence slips to infinity nor it breaks into parts that are going infinitely far away from each other.

\section{Existence theorems }

In the first part of this section, we show the existence of a constrained minimizer using Lions' concentration compactness principle. As the principle described on $\rn$, we use the change of variable formula (\ref{COV}) for the exponential map at $0 \in T_0\(\bn\)$ to translate the variational setup to the Euclidean space $T_0\(\bn\) \cong \rn$. Here, we want to prove the following theorem:

\begin{theorem} \label{Checking concentration compactness}
	Let $\{u_n\} \subset M$ be a minimizing sequence for $\Psi$. There exists a subsequence $\{u_{n_k}\}$ and a corresponding sequence of points $\{x^k\}$ in $\bn$ such that for every $\eps > 0$ there exists $R(\eps) > 0$ so that
	\begin{align*}
		\int_{B\(x^k, R(\eps)\)} \abs{u_{n_k}}^{p+1} \dv \geq 1 - \eps.
	\end{align*}  
\end{theorem}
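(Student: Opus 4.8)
The plan is to transport the constrained minimization to $T_0(\bn)\cong\R^N$ via $\ex$, run P.~L.~Lions' concentration--compactness alternative on the pushed-forward mass measures, and exclude both \emph{vanishing} and \emph{dichotomy}; the $G$-symmetry of $\widetilde{H^1(\bn)}$ is exactly what forbids the latter. Since $\la<\f{(N-1)^2}{4}$, $\norm{\cdot}_\la$ is equivalent to the standard $H^1(\bn)$-norm and $H^1(\bn)\hookrightarrow L^{p+1}(\bn)$ is continuous (because $2<p+1<2^*$), so $S:=\inf_M\Psi>0$. For $\{u_n\}\subset M$ minimizing, $\norm{u_n}_\la^2=2\Psi(u_n)\to2S$, so $\{u_n\}$ is bounded in $H^1(\bn)$; putting $\tilde u_n:=u_n\circ\ex$, \cref{Change of variable} makes $\mu_n:=\abs{\tilde u_n}^{p+1}\,\Upsilon(z)\,\mathrm{d}z$ a sequence of probability measures on $\R^N$ of total mass $\norm{u_n}_{L^{p+1}(\bn)}^{p+1}=1$, each $G$-invariant (because $\abs{u_n(gx)}=\abs{u_n(x)}$, $\Upsilon$ is radial, and $\ex(gz)=g\,\ex(z)$ by \eqref{interaction between g and exponential}). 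Lions' lemma then provides a subsequence $\{u_{n_k}\}$ for which exactly one of compactness, vanishing, dichotomy holds, and it remains to rule out the last two.

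\emph{Vanishing} is excluded by a standard argument: if $\sup_{y}\mu_{n_k}(B_E(y,R))\to0$ for all $R>0$, then (transferring back by \cref{Change of variable}, up to a fixed factor on the radius) $\sup_{x\in\bn}\int_{B(x,R')}\abs{u_{n_k}}^{p+1}\dv\to0$ for all $R'>0$, and combining this with the uniform local Gagliardo--Nirenberg inequality on hyperbolic balls of fixed radius, a covering of $\bn$ by such balls of bounded overlap, and the $H^1(\bn)$-boundedness of $\{u_{n_k}\}$, one gets $u_{n_k}\to0$ in $L^{p+1}(\bn)$ --- contradicting $\norm{u_{n_k}}_{L^{p+1}(\bn)}=1$.

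\emph{Dichotomy} (parameter $\alpha\in(0,1)$) is where the symmetry enters. Using that $-\mathrm{Id}_N\in G$ (take $A=-\mathrm{Id}_{N-1}$ in \eqref{isom.grp}) acts as $z\mapsto-z$, one performs Lions' truncation on a $G$-invariant neighbourhood of the orbit carrying the concentrating mass, obtaining $u_{n_k}=u_{n_k}^{(1)}+u_{n_k}^{(2)}+o(1)$ with $u_{n_k}^{(1)},u_{n_k}^{(2)}\in\widetilde{H^1(\bn)}$, $\mathrm{dist}\big(\mathrm{Supp}\,u_{n_k}^{(1)},\mathrm{Supp}\,u_{n_k}^{(2)}\big)\to\infty$ and $L^{p+1}$-masses $\to\alpha',1-\alpha'$. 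When $\alpha'\in(0,1)$, since cut-offs on annuli of width $\to\infty$ cost only $o(1)$ in $\norm{\cdot}_\la^2$ and, by homogeneity of \eqref{en.fn}--\eqref{neh.set}, $\Psi(w)\geq S\norm{w}_{L^{p+1}(\bn)}^2$ on $\widetilde{H^1(\bn)}\setminus\{0\}$, letting $k\to\infty$ gives
\begin{equation*}
	S=\lim_{k\to\infty}\Psi(u_{n_k})\ \geq\ S\Big((\alpha')^{\frac{2}{p+1}}+(1-\alpha')^{\frac{2}{p+1}}\Big)\ >\ S
\end{equation*}
because $0<\tfrac{2}{p+1}<1$ --- contradiction. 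The degenerate case $\alpha'=1$ (the mass escaping to infinity as a $G$-symmetric pair of ground-state bumps of mass $\tfrac12$, of energy $\to 2^{1-2/(p+1)}\widetilde S$, where $\widetilde S$ is the analogous infimum of $\Psi$ over $H^1(\bn)\setminus\{0\}$) is excluded by the strict estimate $S<2^{\,1-2/(p+1)}\,\widetilde S$. \emph{I expect this step to be the main obstacle}: adapting Lions' truncation to the non-group $G$ so the pieces stay in $\widetilde{H^1(\bn)}$, and proving the threshold inequality by exhibiting a symmetric competitor --- e.g.\ a dipole concentrated near the origin --- whose energy lies strictly below $2^{1-2/(p+1)}\widetilde S$.

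\emph{Conclusion.} Thus compactness holds: there are $y^k\in\R^N$ such that for each $\eps>0$ there is $R_\eps>0$ with $\mu_{n_k}(B_E(y^k,R_\eps))\geq1-\eps$. Taking $\eps=\tfrac14$ and using $-\mathrm{Id}_N\in G$, also $\mu_{n_k}(B_E(-y^k,R_{1/4}))\geq\tfrac34$; were $\abs{y^k}>R_{1/4}$ these balls would be disjoint with total mass $>1$, impossible --- so $\abs{y^k}\leq R_{1/4}$ for all $k$, hence $B_E(y^k,R_\eps)\subset B_E(0,R_{1/4}+R_\eps)$. Since $\ex$ carries Euclidean balls centred at $0$ onto hyperbolic balls centred at $0$, \cref{Change of variable} gives, for a suitable $R(\eps)>0$ independent of $k$,
\begin{equation*}
	\int_{B(0,\,R(\eps))}\abs{u_{n_k}}^{p+1}\dv\ \geq\ \mu_{n_k}\big(B_E(y^k,R_\eps)\big)\ \geq\ 1-\eps ,
\end{equation*}
which is the assertion with $x^k\equiv0$ (equivalently, with the bounded centres $x^k:=\ex(y^k)$).
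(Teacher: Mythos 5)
Your skeleton is the paper's: transport the unit $L^{p+1}$-mass to $T_0(\bn)\cong\R^N$ through $\ex$ and \cref{Change of variable}, apply Lions' trichotomy to the densities $\rho_{n_k}=\abs{u_{n_k}\circ\ex}^{p+1}\Upsilon$, rule out vanishing via the Lions-type lemma (\cref{Lieb}) and dichotomy via strict subadditivity. Your vanishing step is fine, and your closing observation --- $-\mathrm{Id}_N\in G$ makes $\mu_{n_k}$ invariant under $z\mapsto -z$, so two disjoint balls $B_E(\pm y^k,R_{1/4})$ each of mass $\geq 3/4$ are impossible, whence $\abs{y^k}\leq R_{1/4}$ --- is a neat one-line compression of what the paper does separately in \cref{Bound in Nth direction} and \cref{Bound in 1 to N-1 the directions}. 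Note, though, that the statement only asks for concentration around \emph{some} (possibly unbounded) sequence $x^k$; the boundedness is deliberately deferred in the paper, so that part of your argument, while correct, is not where the theorem is decided.

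The genuine gap is in your dichotomy step. You reduce the ``equal split'' scenario --- a $G$-symmetric pair of bumps of mass $\tfrac12$ drifting apart --- to the strict threshold inequality $S<2^{1-2/(p+1)}\widetilde S$, which you leave unproven and yourself flag as the main obstacle; as written the proof does not close there, and it is far from clear that a dipole competitor yields it, since for sign-changing two-bump configurations the interaction terms need not have a favourable sign. The paper needs no threshold at all: it truncates $u_{n_k}$ by cutoffs $\chi_k^1,\chi_k^2$ adapted to the annulus $A(x^k;R,R+3)$, shows $\Psi(u_k^1)+\Psi(u_k^2)\leq\Psi(u_{n_k})+C(R)\eps$ while $\int_{\bn}\abs{u_k^i}^{p+1}\dv\to\alpha,\ 1-\alpha$, hence $\Psi_\alpha+\Psi_{1-\alpha}\leq\Psi_1$ as in \eqref{Psi alpha}, and the homogeneity \eqref{homoginity} then forces $\alpha^{2/(p+1)}+(1-\alpha)^{2/(p+1)}\leq 1$ --- false for \emph{every} $\alpha\in(0,1)$, including $\alpha=\tfrac12$, because $2/(p+1)<1$. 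Your underlying concern (that the truncated pieces should lie in $\widetilde{H^1(\bn)}$ for the comparison with the symmetric infimum $\Psi_\alpha$ to be legitimate) is not unreasonable, since the paper's cutoffs are centred at $x^k$ and are not manifestly $G$-invariant; but trading that comparison for an unproven energy threshold is not a repair --- the subadditivity-plus-homogeneity route is what actually kills dichotomy here, and your proposal never carries it out.
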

Before proving this theorem, we recall the following lemma, which can be derived similarly as in Lemma I.1 of \cite{MR778974}.

\begin{lemma}\label{Lieb}
	Let $1 <p < 2^* - 1$ and $\{v_n\}$ be a bounded sequence in $H^1\(\bn\)$ such that for some $R >0$,
	\begin{align*}
		\lim_{n \to \infty} \sup_{x \in \bn} \int_{B(x,R)} \abs{v_n}^{p+1} \dv = 0.
	\end{align*}
	Then 
	\begin{align*}
		\lim_{n \to \infty} \int_{\bn} \abs{v_n}^{p+1} \dv = 0.
	\end{align*}
\end{lemma}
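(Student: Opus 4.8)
The plan is to reproduce the classical ``vanishing lemma'' argument of P.-L. Lions (Lemma I.1 of \cite{MR778974}), adapting it to $\bn$ by exploiting the homogeneity of hyperbolic space. Write $q:=p+1$, so that $2<q<2^{*}=\tfrac{2N}{N-2}$. The crux is the localized interpolation estimate
\begin{align*}
\int_{\bn}\abs{v}^{q}\dv\;\le\;C\,\Big(\sup_{x\in\bn}\int_{B(x,R)}\abs{v}^{q}\dv\Big)^{1-\frac{2}{q}}\,\norm{v}_{H^{1}(\bn)}^{2},\qquad v\in H^{1}(\bn),
\end{align*}
with $C=C(N,R,q)$. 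Granting this, the lemma is immediate: $\{v_n\}$ is bounded in $H^{1}(\bn)$ (equivalently in $\norm{\cdot}_\la$) and the supremum above tends to $0$ by hypothesis, so the left-hand side tends to $0$.

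To obtain the estimate I would first build a uniformly locally finite covering. Choose a maximal $R$-separated set $\{x_i\}_i\subset\bn$, i.e.\ $d_{\bn}(x_i,x_j)\ge R$ for $i\ne j$ and maximal with this property; by maximality $\bn=\bigcup_i B(x_i,R)$, while the balls $B(x_i,R/2)$ are pairwise disjoint. Since the hyperbolic translations $\tau_b$ of \eqref{hyperbolic translation} are isometries with $\tau_b(0)=b$, the volume $V(\rho):=\mathrm{Vol}_{\bn}(B(x,\rho))$ depends only on $\rho$; comparing the volumes of the disjoint balls $B(x_i,R/2)$ contained in $B(y,3R/2)$ shows that each $y\in\bn$ lies in at most $k_0:=V(3R/2)/V(R/2)<\infty$ of the balls $B(x_i,R)$. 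Next, the local Sobolev inequality $\norm{w}_{L^{2^{*}}(B(x_i,R))}\le C_S\,\norm{w}_{H^{1}(B(x_i,R))}$ holds with $C_S=C_S(N,R)$ independent of $i$: writing $B(x_i,R)=\tau_{x_i}\big(B(0,R)\big)$ and using that $\tau_{x_i}$ is an isometry, it suffices to treat $B(0,R)$, where in Euclidean coordinates $B(0,R)=B_E\big(0,\tanh(R/2)\big)$ and the metric $\big(\tfrac{2}{1-|x|^{2}}\big)^{2}\delta_{ij}$ is smooth and uniformly comparable to the Euclidean one on the closed ball, so the classical Sobolev embedding on a bounded Euclidean domain applies. (Alternatively, one may transfer everything to $\R^N$ via $\ex$ and \cref{Change of variable}, using that $\Upsilon$ is bounded above and below on compact sets.)

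On each ball $B_i:=B(x_i,R)$ I would then run a three-term Hölder interpolation: decompose $\abs{v}^{q}=\abs{v}^{q-2}\cdot\abs{v}^{b}\cdot\abs{v}^{c}$ with $b+c=2$ and $c=\dfrac{1-2/q}{\,1/2-1/2^{*}\,}\in(0,2)$ — which is positive and less than $2$ precisely because $2<q<2^{*}$ — and apply Hölder with exponents $\tfrac{q}{q-2},\ \tfrac{2}{b},\ \tfrac{2^{*}}{c}$ to get
\begin{align*}
\int_{B_i}\abs{v}^{q}\dv\;\le\;\Big(\int_{B_i}\abs{v}^{q}\dv\Big)^{\frac{q-2}{q}}\norm{v}_{L^{2}(B_i)}^{b}\,\norm{v}_{L^{2^{*}}(B_i)}^{c}\;\le\;C\,\Big(\sup_j\int_{B_j}\abs{v}^{q}\dv\Big)^{\frac{q-2}{q}}\norm{v}_{H^{1}(B_i)}^{2},
\end{align*}
where the last step uses the uniform local Sobolev inequality, $\norm{v}_{L^{2}(B_i)}\le\norm{v}_{H^{1}(B_i)}$, and $b+c=2$. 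Summing over $i$, using $\bn=\bigcup_i B_i$ on the left and the bounded overlap $\sum_i\norm{v}_{H^{1}(B_i)}^{2}\le k_0\,\norm{v}_{H^{1}(\bn)}^{2}$ on the right, yields the displayed estimate; applying it to $v=v_n$ and letting $n\to\infty$ completes the proof.

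The only genuinely non-routine point is assembling the two geometric inputs for $\bn$ — the uniformly locally finite covering by fixed-radius balls and the $i$-independent local Sobolev constant — both of which rest on the homogeneity of hyperbolic space; once these are recorded, the remainder is the standard interpolate-and-sum computation.
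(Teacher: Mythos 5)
Your proof is correct and is precisely the argument the paper points to: the paper gives no proof of \cref{Lieb}, only the remark that it "can be derived similarly as in Lemma I.1 of" Lions' concentration-compactness paper, and your covering-plus-interpolation scheme is that standard derivation. The exponent bookkeeping checks out ($c=\frac{1-2/q}{1/2-1/2^*}\in(0,2)$ exactly when $2<q<2^*$), and you correctly isolate the only points that need care in passing from $\R^N$ to $\bn$ — the uniformly locally finite covering by congruent balls and the center-independent local Sobolev constant, both supplied by the transitivity of $\mathrm{Iso}(\bn)$.
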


\begin{proof}[Proof of \cref{Checking concentration compactness}] 	We have $\{u_n\} \subset M$ to be a minimizing sequence. Then from (\ref{COV}), we have
	\begin{align*}
		\int_{\R^N} |u_n\(\ex(z)\)|^{p+1} \;\Upsilon(z) \mathrm{~d}z & = 1
	\end{align*}
	Let us consider a sequence of functions $\rho_n : T_0(\bn) \cong \R^N \to \R$ such that
	\begin{align}
		\rho_n(z) := |u_n\(\ex(z)\)|^{p+1} \;\Upsilon(z) \label{CCS}
	\end{align}
	It is easy to observe that
	\begin{align*}
		\rho_n \in L^1(\R^N), \quad \rho_n \geq 0, \quad \text{and} \quad \int_{\R^N} \rho_n(z)  \mathrm{~d}z = 1
	\end{align*}
	Now we show that $\{\rho_n\}$ does not satisfy vanishing and dichotomy conditions in the concentration compactness principle.\\
	
	\textbf{Case I: Vanishing does not hold.}  If possible, let there exist a subsequence $\{\rho_{n_k}\}$ of $\{\rho_n\}$ such that
	\begin{align}
		\lim\limits_{k \to \infty} \; \sup_{\tilde{z} \in \rn} \int_{B_E(\tilde{z},R)} \rho_{n_k}(z) \mathrm{~d}z = 0, \;\; \forall R < \infty.
	\end{align}
	From (\ref{COV}), we have
	\begin{align*}
		\lim\limits_{k \to \infty} \; \sup_{\tilde{z} \in \rn} \int_{B(\ex(\tilde{z}), R)} \abs{u_{n_k}}^{p+1} \dv = 0.
	\end{align*}
	Now from the \cref{Lieb}, we have
	\begin{align*}
		\lim_{k \to \infty} \int_{\bn} \abs{u_{n_k}}^{p+1} \dv = 0
	\end{align*}
	This is a contradiction.
	
	\textbf{Case II: Dichotomy does not hold.} For every $\eta > 0$, we set
	\begin{align*}
		\Psi_\eta = \inf \{\Psi(u) \;\vert \; u \in \widetilde{H^1\(\bn\)}\,,\, \int_{\bn} |u|^{p+1} \dv = \eta\}
	\end{align*}
	Poincar\'e-Sobolev inequality implies $\Psi_\eta$ is finite for every positive real number $\eta$. Also, using the homogeneity of the norm, it is easy to observe that for every $\eta > 0$,
	\begin{align}
		\Psi_\eta = \eta^{\f{2}{p+1}} \Psi_1.\label{homoginity}
	\end{align}
	If possible, let the dichotomy hold, i.e., there exists a subsequence of $\{\rho_n\}$ denoted as $\{\rho_{n_k}\}$ and $ \al \in (0 , 1)$ such that for all $\eps > 0$, there exist a positive real number $R \equiv R(\eps)$, a sequence of points $\{z^k\} \subset \rn$, $\{R_k \equiv R_k \(\eps\)\} \subset \R$, and $k_0 \in \N$ such that for every $k \geq k_0$ we have
	\begin{align}
		R_k > R+3, \label{Dichotomy 2}
	\end{align}
	and
	\begin{align}
		\rho_{k}^1 := \rho_{n_k} \rchi_{B_E\(z^k,R\)}; \quad \quad \rho_k^2 := \rho_{n_k} \rchi_{\rn \setminus B_E\(z^k,R_k\) } \label{Dichotomy support}
	\end{align}
	satisfy the following:
	\begin{align}
		\norm{\rho_{n_k} - \(\rho_k^1 + \rho_k^2\)}_{L^1} < \eps, \quad \abs{\int_{\rn} \rho_k^1 \mathrm{~d}z - \al} < \eps, \quad  \abs{\int_{\rn} \rho_k^2 \mathrm{~d}z - (\la - \al)} < \eps \label{Dichotomy 1}
	\end{align}
	Furthermore, we have $R_k \to + \infty$ as $k \to \infty$. Note, from (\ref{Dichotomy 2}) and (\ref{Dichotomy support}), it can be observed that for every $k \geq k_0$,
	\begin{align}
		\mathrm{Supp} \; \rho_k^1 \cap \mathrm{Supp} \; \rho_k^2 = \phi. \label{Support disjoint}
	\end{align}
	We denote $x^k := \ex \(z^k\).$ Now, we define two smooth functions on $\bn$ to be
	\begin{align*}
		\chi_k^1  = \begin{cases}
			1 & \text{if} \quad x \in B\(x^k,R\),\\
			0 & \text{if} \quad x \in \bn \setminus B\(x^k,R+1\)
		\end{cases}
	\quad	\text{and} \quad \chi_k^2  = \begin{cases}
			1 & \text{if} \quad x \in \bn \setminus B\(x^k,R+3\),\\
			0 & \text{if} \quad x \in B\(x^k,R+2\)
		\end{cases}
	\end{align*}
	such that $0 \leq \chi_k^1, \chi_k^2 \leq 1$ and $\abs{\nabla_{\bn} \chi_k^1}, \abs{\nabla_{\bn} \chi_k^2} \leq 1$. 
	Now, let us define
	\begin{align*}
		u_k^1 := u_{n_k} \chi_k^1, \quad & u_k^2 := u_{n_k} \chi_k^2; \\
		\text{and,} \quad \beta_k := \int_{\bn} \abs{u_k^1}^{p+1} \dv, \quad & \gamma_k :=  \int_{\bn} \abs{u_k^2}^{p+1} \dv.
	\end{align*}
	From (\ref{Dichotomy 1}) and (\ref{Support disjoint}), for $k \geq k_0$ we have
	\begin{align*}
		 \int_{A_E \(z^k; R, R_k\)} \rho_{n_k} \mathrm{~d}z < \eps,
		\quad \text{and} \quad  \abs{\int_{B_E\(z^k, R\)} \rho_{n_k} \mathrm{~d}z - \al} < \eps.
	\end{align*}
	Using (\ref{COV}), for every $k \geq k_0$,
	\begin{align}
		\int_{A \(x^k; R, R+3\)} \abs{u_{n_k}}^{p+1} \dv < \eps,
		 \quad\text{and} \quad & \abs{\int_{B(x^k, R)} \abs{u_{n_k}}^{p+1} \dv - \al } < \eps. \label{Estimate of u_n_k on annulus}
	\end{align}
	Combining the above two inequalities, for every $k \geq k_0$ we have
	\begin{align}
		& \abs{\int_{\bn} \abs{u_k^1}^{p+1} \dv - \al} < 2 \eps \implies 
		 \abs{\beta_k - \al} < 2 \eps. \label{beta_k convergence}
	\end{align}
	Similarly, we can have
	\begin{align}
		\abs{\gamma_k - \(1- \al\)} < 2 \eps. \label{gamma_k convergence}
	\end{align}
	Let us denote
	\begin{align*}
		A_k^1 := B\(x^k, R+1\), \quad & B_k^1 := A \(x^k; R, R+1\);\\
		\text{and} \quad A_k^2 := \bn \setminus \overline{ B\(x^k, R+2\)}, \quad & B_k^2 := A \(x^k; R+2, R+3\).
	\end{align*}
	Then, $A_K^1 \cap A_k^2 = \phi, \quad \text{and} \quad B_k^1 \cap B_k^2 = \phi.$ Now for $i =1,2$ we have
	\begin{align*}
		\int_{\bn} \abs{\nabla_{\bn} u_k^i}^2 \dv & \leq \int_{A_k^i}  \abs{\nabla_{\bn} u_{n_k}}^2 \dv + \int_{B_k^i} \abs{u_{n_k}}^2 \dv\\ + & 2 \int_{B_k^i}  \<\chi_k^i \nabla_{\bn} u_{n_k}\; , \;u_{n_k} \nabla_{\bn} \chi_k^i \> \dv.
	\end{align*}
	Therefore we get
	\begin{align*}
		\int_{\bn} \abs{\nabla_{\bn} u_k^1}^2 \dv + \int_{\bn} \abs{\nabla_{\bn} u_k^2}^2 \dv \leq & \int_{A_k^1 \cup A_k^2} \abs{\nabla_{\bn} u_{n_k}}^2 \dv + \int_{B_k^1 \cup B_k^2} \abs{u_{n_k}}^2 \dv + \\
		& \; \qquad 2 \sum_{i=1}^{2} \int_{B_k^i}  \<\chi_k^i \nabla_{\bn} u_{n_k}\; , \;u_{n_k} \nabla_{\bn} \chi_k^i \> \dv.
	\end{align*}
	
	Now for every $k \geq k_0$, from (\ref{Estimate of u_n_k on annulus}) we have
	\begin{align*}
		\int_{B_k^1 \cup B_k^2} \abs{u_{n_k}}^2 \dv & \leq \int_{A \(x^k; R, R+3\)} \abs{u_{n_k}}^2 \dv\\
		& \leq C(R) \int_{A \(x^k; R, R+3\)} \abs{u_{n_k}}^{p+1} \dv \leq C(R) \eps.
	\end{align*}
	Since $\{\abs{\nabla_{\bn} u_{n_k}}\}$ is uniformly bounded in $L^2\(\bn\)$, for every $k \geq k_0$ we have
	\begin{align*}
		2 \sum_{i=1}^{2} \int_{B_k^i}  \<\chi_k^i \nabla_{\bn} u_{n_k}\; , \;u_{n_k} \nabla_{\bn} \chi_k^i \> \dv & \leq C \int_{B_k^1 \cup B_k^2} \abs{u_{n_k}}^2 \dv  \leq C(R) \eps.
	\end{align*}
	Hence, for every $k \geq k_0$ we get
	\begin{align}
		\int_{\bn} \abs{\nabla_{\bn} u_k^1}^2 \dv + \int_{\bn} \abs{\nabla_{\bn} u_k^2}^2 \dv & \leq \int_{\bn} \abs{\nabla_{\bn} u_{n_k}}^2 \dv + C(R) \eps \label{estimate for gradient norm}
	\end{align}
	
	For $\la \leq 0$, we will get
	\begin{align}
		- \la \int_{\bn} \abs{u_{n_k}}^2 \dv \geq - \la \int_{\bn} \abs{u_k^1}^2 \dv -\la \int_{\bn} \abs{u_k^2}^2 \dv, \quad \forall \, k \geq k_0. \label{lambda less than 0}
	\end{align}	
	Now for $\la > 0$, we can argue that for every $k \geq k_0$
	\begin{align}
		- \la \int_{\bn} \abs{u_k^1}^2 \dv - \la \int_{\bn} \abs{u_k^2}^2 \dv \leq - \la \int_{\bn} \abs{u_{n_k}}^2 \dv + \la C(R)\eps, \quad \forall \, k \geq k_0. \label{lambda greater than 0}
	\end{align}
	Hence, combining (\ref{lambda less than 0}) and (\ref{lambda greater than 0}), we get for every $k \geq k_0$,
	\begin{align}
		- \la \int_{\bn} \abs{u_k^1}^2 \dv - \la \int_{\bn} \abs{u_k^2}^2 \dv \leq - \la \int_{\bn} \abs{u_{n_k}}^2 \dv + \la_1 C(R)\eps. \label{estimate for square norm}
	\end{align}
	Therefore, from (\ref{estimate for gradient norm}) and (\ref{estimate for square norm}) we have
	\begin{align*}
		\Psi \(u_k^1\) + \Psi \(u_k^2\) \leq \Psi \(u_{n_k}\) + C(R)\eps, \quad \forall \, k \geq k_0.
	\end{align*}
	Here $C(R)$ is a generic constant that only depends on $R$. Since $\eps >0$ is arbitrary, taking $k \to \infty$ and using (\ref{beta_k convergence}), (\ref{gamma_k convergence}) we have
	\begin{align}
		\Psi_\al + \Psi_{1-\al} \leq \Psi_1  \label{Psi alpha}
	\end{align}
	Suppose there exists $u \in \widetilde{H^1\(\bn\)}$ such that 
	\begin{align*}
		\int_{\bn} \abs{u}^{p+1} \dv = \al, \; \text{i.e.,} \int_{\bn} \abs{\f{u}{\al^{\f{1}{p+1}}}}^{p+1} \dv & = 1
	\end{align*}
	Now from the definition we have
	\begin{align*}
		\Psi_1 & \leq \Psi \( \f{u}{\al^{\f{1}{p+1}}} \) = \f{1}{\al^{\f{2}{p+1}}} \Psi(u)
	\end{align*}
	This implies
	\begin{align*}
		\al^{\f{2}{p+1}} \Psi_1 \leq \inf \{ \Phi(u) \,:\, u \in \widetilde{H^1\(\bn\)}, \int_{\bn} |u|^{p+1} \dv = \al\} = \Psi_\al.
	\end{align*}
	Similarly, we can show that
	\begin{align*}
		\(1- \al\)^{\f{2}{p+1}} \Psi_1 & \leq \Psi_{1 - \al}
	\end{align*}
	Now from (\ref{Psi alpha}), we observe that
	\begin{align*}
		\al^{\f{2}{p+1}} + \(1- \al\)^{\f{2}{p+1}} & \leq 1
	\end{align*}
	This is impossible for $p > 1$ and $\al \in (0,1)$. Therefore, dichotomy does not hold. \\
	
	Now the only possibility is concentration, i.e., there exists $\{z^k\} \subset T_0\(\bn\) \cong \R^N$ such that
	\begin{align}
		\forall \eps > 0, \; \exists R(\eps) > 0, \;& \int_{B_E\(z^k, R(\eps)\)} \rho_{n_k} \mathrm{~d}z \geq 1 - \eps, \label{Concentration}
	\end{align}
	Let us denote $x^k := \ex \(z^k\).$ Therefore we have
	\begin{align*}
		\forall \eps > 0, \; \exists R(\eps) > 0, \;& \int_{B\(x^k, R(\eps)\)} \abs{u_{n_k}}^{p+1} \dv \geq 1-\eps.
	\end{align*}
\end{proof}

Now, our next goal is to show that $\{x^k\}$ is a bounded sequence in $\bn$. To show this, we use the actions of $T_g$. Then we can use the Rellich-Kondrakov theorem to show the existence of a minimizer. As in the above theorem, let us denote
\begin{align*}
	z^k = \(z^k_1, z^k_2, \cdots , z^k_{ N }\)& =  \ex^{-1} \(z^k\) \quad \forall k \in \N.
\end{align*}
Also, denote $\overline{z^k} := \(z^k_1, z^k_2, \cdots , z^k_{ N - 1}\) \in \R^{N-1}.$
\begin{lemma} \label{Bound in Nth direction}
	The sequence $\{z_N^k\}$ is bounded in $\R$.
\end{lemma}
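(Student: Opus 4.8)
The plan is to exploit the reflection symmetry that every element of $\widetilde{H^1(\bn)}$ carries in the last coordinate. First I would isolate the block-diagonal element $g_0 = \begin{bmatrix} \mathrm{Id}_{N-1} & 0 \\ 0 & -1 \end{bmatrix} \in G$, which acts on $T_0(\bn) \cong \rn$ as the reflection $\sigma\colon (\overline z, z_N) \mapsto (\overline z, -z_N)$. Since $\{u_n\} \subset M \subset \widetilde{H^1(\bn)}$, relation \eqref{equivalent definition} (via \eqref{interaction between g and exponential}) gives $u_{n_k}(\ex(\sigma z)) = - u_{n_k}(\ex(z))$, so $|u_{n_k}(\ex(\sigma z))|^{p+1} = |u_{n_k}(\ex(z))|^{p+1}$; combined with the fact that the Jacobian $\Upsilon$ of \cref{Change of variable} depends only on $|z|$ and is therefore $\sigma$-invariant, this shows that the concentration densities $\rho_{n_k}$ of \eqref{CCS} satisfy $\rho_{n_k} \circ \sigma = \rho_{n_k}$ on $\rn$ for every $k$.

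Next I would suppose, for contradiction, that $\{z_N^k\}$ is unbounded, and pass to a subsequence (not relabeled) along which $|z_N^k| \to \infty$. Applying \cref{Checking concentration compactness} with $\eps = \f{1}{4}$ and writing $R := R(\f{1}{4})$, the change of variables \eqref{COV} (recall $\ex$ maps $B_E(z^k,R)$ diffeomorphically onto $B(x^k,R)$) turns the concentration estimate into $\int_{B_E(z^k,R)} \rho_{n_k}\, dz \geq \f{3}{4}$. Setting $\tilde{z}^k := \sigma(z^k)$ and changing variables by $\sigma$ in the integral, the invariance $\rho_{n_k}\circ\sigma = \rho_{n_k}$ yields $\int_{B_E(\tilde{z}^k, R)} \rho_{n_k}\, dz \geq \f{3}{4}$ as well.

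Then I would finish by a mass count: once $k$ is large enough that $|z_N^k| > R$, the $z_N$-projections of $B_E(z^k, R)$ and $B_E(\tilde{z}^k, R)$ are the disjoint intervals $(z_N^k - R,\, z_N^k + R)$ and $(-z_N^k - R,\, -z_N^k + R)$, so these two Euclidean balls are disjoint; since $\rho_{n_k} \geq 0$ and $\int_{\rn}\rho_{n_k}\, dz = 1$, we get
\begin{align*}
 1 \;\geq\; \int_{B_E(z^k,R)} \rho_{n_k}\, dz \;+\; \int_{B_E(\tilde{z}^k,R)} \rho_{n_k}\, dz \;\geq\; \f{3}{2},
\end{align*}
a contradiction. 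Hence $\{z_N^k\}$ is bounded.

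The only step requiring genuine care is the first one: making rigorous the passage from the scalar oddness $u(g_0 x) = -u(x)$ to a true reflection symmetry of the densities $\rho_{n_k}$ on $\rn$, which is exactly where the radial form of $\Upsilon$ is used. After that the argument is the elementary principle that a probability density cannot place mass $\geq \f{3}{4}$ on each of two disjoint sets. I do not expect to need any control of the horizontal components $\{\overline{z^k}\}$ for this lemma; their boundedness, if it is needed later, should follow from a separate (horizontal) recentering in the argument that extracts the minimizer.
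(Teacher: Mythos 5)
Your argument is correct, and it gets to the contradiction by a more elementary route than the paper's. Both proofs rest on the same underlying fact: for $g_0=\begin{bmatrix}\mathrm{Id}_{N-1} & 0\\ 0 & -1\end{bmatrix}\in G$ the antisymmetry $u(g_0x)=-u(x)$ forces the concentrated $L^{p+1}$ mass to reappear in a second region obtained by flipping the sign of the $N$-th coordinate, so unboundedness of $\{z_N^k\}$ would make the total mass at least $2(1-\eps)>1$. The paper implements this by recentering with hyperbolic translations: it sets $b_k=\ex\(0,\dots,0,z_N^k\)$, passes to $\tilde u_k=u_{n_k}\circ\tau_{b_k}$, and invokes \cref{relationship between translation and action} together with \cref{change of variable for translation} to produce two disjoint slabs, $\ex\(\R^{N-1}\times(-R,R)\)$ and its image under $\tau_{-b_k}\circ\tau_{-b_k}$. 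You instead stay entirely in the tangent space: the density $\rho_{n_k}$ of \eqref{CCS} is invariant under the Euclidean reflection $\sigma(\overline z,z_N)=(\overline z,-z_N)$, because $\abs{u_{n_k}\circ\ex}^{p+1}$ is $\sigma$-invariant by \eqref{equivalent definition} and \eqref{interaction between g and exponential} while $\Upsilon$ is radial, and then you double-count the mass of \eqref{Concentration} on the two balls $B_E(z^k,R)$ and $B_E(\sigma z^k,R)$, which are disjoint once $\abs{z_N^k}>R$. This bypasses the translation machinery and \cref{relationship between translation and action} altogether, and it localizes the duplicated mass on balls rather than slabs; the paper's version has the mild advantage of running parallel to the companion argument of \cref{Bound in 1 to N-1 the directions} (which also produces a disjoint congruent copy of a cylinder via a group element), but for this lemma your reflection argument is complete and, if anything, cleaner. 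The one step you flag as delicate --- passing from the pointwise oddness of $u_{n_k}$ to $\rho_{n_k}\circ\sigma=\rho_{n_k}$ --- is handled exactly as it should be.
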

\begin{proof}
	Let us denote $b_k  := \ex \(0, \cdots, 0, z_N^k\) = \(0, \cdots, 0, x_N^k\) \in \bn,$ and define
	\begin{align*}
		\tilde{u}_k := u_{n_k} \circ \tau_{b_k}
	\end{align*}
	where $\tau_{b_k}$ is the hyperbolic translation. From (\ref{Concentration}), we can write
	\begin{align*}
		& \int_{\R^{N-1} \times \(z_N^k - R(\eps), z_N^k + R(\eps)\)} \rho_{n_k} \mathrm{~d}z \geq 1-\eps\\
		\Rightarrow & \int_{\ex\(\R^{N-1} \times \(z_N^k - R(\eps), z_N^k + R(\eps)\)\)} \abs{u_{n_k}}^{p+1} \dv \geq 1 - \eps.
	\end{align*}
	Now from the \cref{change of variable for translation}, we have
	\begin{align}
		\int_{\ex \( \mathbb{R}^{N-1} \times \(- R\(\eps\), R\(\eps\)\)\)} \abs{\tilde{u}_k}^{p+1} \dv \geq 1 - \eps. \label{31}
	\end{align}
	From (\ref{interaction between g and exponential}), we have
	\begin{align*}
		& g \, \ex \(\(\R^{N-1} \times \(- R\(\eps\), R\(\eps\)\)\)\) = \ex \(\R^{N-1} \times \(- R\(\eps\), R\(\eps\)\)\), \quad \forall g \in G.
	\end{align*}
	Now (\ref{31}) becomes, for every $g \in G$
	\begin{align}
		\int_{\ex \( \mathbb{R}^{N-1} \times \(- R\(\eps\), R\(\eps\)\)\)} \abs{\tilde{u}_k (gx)}^{p+1} \dv \geq 1 - \eps. \label{32} 
	\end{align}
	From the \cref{relationship between translation and action} we have
	\begin{align*}
		\tilde{u}_k (gx) & =  u_{n_k} \circ \tau_{b_k} \(gx\)= - u_{n_k} \circ \tau_{-b_k} (x)= - \tilde{u}_k \circ \tau_{- b_k} \circ \tau_{- b_k} (x).
	\end{align*}
	Now from (\ref{32}) we get
	\begin{align}
\int_{\tau_{-b_k} \circ \, \tau_{-b_k} \(\ex \( \mathbb{R}^{N-1} \times \(- R\(\eps\), R\(\eps\)\)\)\)} \abs{\tilde{u}_k(x)}^{p+1} \dv \geq 1 - \eps. \label{33}
	\end{align}
	If possible, let $\{z_N^k\}$ have a subsequence, still denoted by $\{z_N^k\}$, such that $z_N^k \to \infty$ as $k \to \infty$. This implies corresponding $b_k \to \infty$ in $\bn$. Then for sufficiently large $k$ we have
	\begin{align*}
		\ex \( \mathbb{R}^{N-1} \times \(- R\(\eps\), R\(\eps\)\)\) \cap \tau_{-b_k} \circ \, \tau_{-b_k} \(\ex \( \mathbb{R}^{N-1} \times \(- R\(\eps\), R\(\eps\)\)\)\) = \phi
	\end{align*}
	Now from (\ref{32}) and (\ref{33}) we observe
	\begin{align*}
		& \int_{\bn} \abs{\tilde{u}_k(x)}^{p+1} \dv  \geq 2(1 - \eps)
		\Rightarrow  \int_{\bn} \abs{u_k(x)}^{p+1} \dv  \geq 2(1 - \eps)
	\end{align*}
	This is a contradiction to the fact that $u_k \in M$.
\end{proof}

\begin{lemma}\label{Bound in 1 to N-1 the directions}
	$\{ \overline{z^k} \}$ is bounded in $\R^{N-1}$.
\end{lemma}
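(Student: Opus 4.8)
The first thing I would do is extract the extra symmetry that membership in $\widetilde{H^1\(\bn\)}$ confers on the densities $\rho_{n_k}$: namely, each $\rho_{n_k}$ from \eqref{CCS} is invariant under the whole orthogonal group $O(N-1)$ acting on the first $N-1$ coordinates. Indeed, for $B\in O(N-1)$ one factors $\mathrm{diag}(B,1)=\mathrm{diag}(B,-1)\,\mathrm{diag}(I_{N-1},-1)$ as a product of two elements of $G$, so applying the relation $u_{n_k}(gx)=-u_{n_k}(x)$, $g\in G$, twice yields $u_{n_k}\(\mathrm{diag}(B,1)x\)=u_{n_k}(x)$ for every $x\in\bn$. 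Since $\ex$ commutes with every orthogonal transformation (by \eqref{exponential map} it depends on $z$ only through $|z|$ and $z/|z|$) and $\Upsilon(z)$ depends only on $|z|$, it follows that $\rho_{n_k}\(\mathrm{diag}(B,1)z\)=\rho_{n_k}(z)$ for all $B\in O(N-1)$, i.e. $\rho_{n_k}$ is radial in the $(z_1,\dots,z_{N-1})$-variables.

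With this in hand, the plan is a disjoint-copies argument, parallel to the proof of \cref{Bound in Nth direction} but using the $O(N-1)$-invariance in place of the antisymmetry in the last variable. Fix $\eps\in\(0,\tfrac{1}{2}\)$, and let $R=R(\eps)$ and $\{z^k\}$ be as produced in \cref{Checking concentration compactness}, so that \eqref{Concentration} gives $\int_{B_E\(z^k,R\)}\rho_{n_k}\mathrm{~d}z\ge 1-\eps$ for every $k$. Suppose, towards a contradiction, that $\{\overline{z^k}\}$ is unbounded in $\R^{N-1}$, and pass to a subsequence along which $|\overline{z^k}|\to\infty$. Consider the orthogonal map $g_0:=\mathrm{diag}(-I_{N-1},1)$: it preserves Lebesgue measure, satisfies $\rho_{n_k}\circ g_0=\rho_{n_k}$ by the previous paragraph, and carries $B_E\(z^k,R\)$ onto $B_E\(g_0 z^k,R\)$, whence $\int_{B_E\(g_0 z^k,R\)}\rho_{n_k}\mathrm{~d}z=\int_{B_E\(z^k,R\)}\rho_{n_k}\mathrm{~d}z\ge 1-\eps$. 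But $g_0 z^k=\(-\overline{z^k},z_N^k\)$, so $|z^k-g_0 z^k|=2|\overline{z^k}|\to\infty$; hence for all $k$ large the balls $B_E\(z^k,R\)$ and $B_E\(g_0 z^k,R\)$ are disjoint, forcing $1=\int_{\R^N}\rho_{n_k}\mathrm{~d}z\ge 2(1-\eps)>1$, which is absurd. Thus $\{\overline{z^k}\}$ is bounded; combined with \cref{Bound in Nth direction}, the sequence $\{z^k\}$, and hence $\{x^k\}=\{\ex(z^k)\}$, is bounded in $\bn$.

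The only step that is not routine bookkeeping is the symmetrization observation in the first paragraph — recognizing that the constraint cutting out $\widetilde{H^1\(\bn\)}$ already forces $u_{n_k}$, hence $\rho_{n_k}$, to be radial in the first $N-1$ variables. Once that is in hand, the rest is the familiar concentration-compactness conclusion: two disjoint balls each carrying $\rho_{n_k}$-mass at least $1-\eps$ cannot coexist when the total mass equals $1$ and $\eps<\tfrac{1}{2}$. One could alternatively run the whole estimate directly on $\bn$ with hyperbolic balls $B(x^k,R)$ and the isometry $g_0\in O(N)\subset\mathrm{Iso}(\bn)$, using $\int_{B(g_0 x^k,R)}|u_{n_k}|^{p+1}\dv=\int_{B(x^k,R)}|u_{n_k}|^{p+1}\dv$ and $d_{\bn}(x^k,g_0 x^k)\to\infty$, but working through $\rho_{n_k}$ on $T_0\(\bn\)\cong\R^N$ is cleaner since the concentration alternative \eqref{Concentration} is already recorded there and $g_0$ acts there as an ordinary Euclidean rotation.
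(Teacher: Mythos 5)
Your proof is correct and follows essentially the same strategy as the paper: exploit the $O(N-1)$-symmetry in the first $N-1$ coordinates to produce, when $|\overline{z^k}|\to\infty$, a second region disjoint from the first carrying $\rho_{n_k}$-mass at least $1-\eps$, contradicting $\int_{\R^N}\rho_{n_k}\,\mathrm{d}z=1$. The only cosmetic difference is that you compose two elements of $G$ to get genuine invariance of $u_{n_k}$ under $\mathrm{diag}(B,1)$ and then work with Euclidean balls and $g_0=\mathrm{diag}(-I_{N-1},1)$, whereas the paper applies a single element $\hat g=\mathrm{diag}(h,-1)\in G$ and uses cylinders $B_E\(\overline{z^k},R\)\times\R$ to absorb the sign flip in the last coordinate.
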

\begin{proof}
	From (\ref{Concentration}), we can easily see
	\begin{align*}
		\int_{B_E\(\overline{z^k}, R(\eps)\) \times \R} \rho_{n_k} \mathrm{~d}z \geq 1 - \eps,
	\end{align*}
	where $B_E\(\overline{z^k}, R(\eps)\)$ is a ball of radius $R(\eps)$ and centered at $\overline{z^k}$ inside $\R^{N-1}$. Also, let us recall
	\begin{align*}
		\rho_{n_k} (z) = \abs{u_{n_k} \(\ex(z)\)}^{p+1} \Upsilon(z), \;\; \text{and} \; \Upsilon(gz) = \Upsilon(z), \; \forall g \in G.
	\end{align*}
	If possible, let $\overline{z^k} \to \infty$ in $\R^{N-1}$. Then for large enough $k$, there exists $h \in O(N-1)$ such that
	\begin{align*}
		h \(B_E\(\overline{z^k}, R(\eps)\)\) \cap B_E\(\overline{z^k}, R(\eps)\) = \phi.
	\end{align*}
	Now let us define 
	\begin{align*}
		\hat{g} := \begin{bmatrix}
			h & 0 \\
			0 & -1
		\end{bmatrix}
	\end{align*}
	It is easy to observe that
	\begin{align}
		\hat{g} \(B_E\(\overline{z^k}, R(\eps)\) \times \R\) \cap B_E\(\overline{z^k}, R(\eps)\) \times \R= \phi \label{34}
	\end{align}
	Now from the change of variable formula, we have
	\begin{align}
		\int_{B_E\(\overline{z^k}, R(\eps)\) \times \R} \rho_{n_k} \(\hat{g} z\) \mathrm{~d}z & = \int_{\hat{g} \(B_E\(\overline{z^k}, R(\eps)\) \times \R\)} \rho_{n_k} (z) \mathrm{~d}z \label{35}
	\end{align}
	Now
	\begin{align*}
		\int_{B_E\(\overline{z^k}, R(\eps)\) \times \R} \rho_{n_k} \(\hat{g} z\) \mathrm{~d}z  = \int_{B_E\(\overline{z^k}, R(\eps)\) \times \R} \abs{ u_{n_k} \(\ex(z)\)}^{p+1} \Upsilon(z)  \mathrm{~d}z
		& \geq 1 - \eps.
	\end{align*}
	Therefore, from (\ref{34}) and (\ref{35}) we have
	\begin{align*}
		\int_{\bn} \abs{u_{n_k}}^{p+1} \dv & = \int_{\R^N} \rho_{n_k} \mathrm{~d}z\\
		& \geq  \int_{B_E\(\overline{z^k}, R(\eps)\) \times \R} \rho_{n_k}(z) \mathrm{~d}z + \int_{\hat{g} \(B_E\(\overline{z^k}, R(\eps)\) \times \R\)} \rho_{n_k} (z) \mathrm{~d}z \\
		& \geq 2(1 - \eps).
	\end{align*}
	This is a contradiction, since $\{u_{n_k}\} \subset M$.
\end{proof}
Now we prove the existence of a constrained minimizer.\\

\begin{theorem}
	There exists a constrained minimizer $\hat{u}$ of $\Psi$ over $M$.
\end{theorem}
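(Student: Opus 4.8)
The plan is to carry out the constrained minimization directly. First I would fix $m:=\inf_M\Psi$ and pick a minimizing sequence $\{u_n\}\subset M$, so that $\norm{u_n}_\la^2=2\Psi(u_n)\to 2m$. Since $\norm{\cdot}_\la$ is equivalent to the standard norm on $H^1\(\bn\)$, the sequence is bounded in $H^1\(\bn\)$, and the Poincar\'e--Sobolev inequality gives $m>0$. Applying \cref{Checking concentration compactness}, after passing to a subsequence $\{u_{n_k}\}$ I obtain points $x^k=\ex\(z^k\)\in\bn$ with $\int_{B\(x^k,R(\eps)\)}\abs{u_{n_k}}^{p+1}\dv\ge 1-\eps$ for every $\eps>0$.

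The crucial point is then that the concentration centers do not escape to infinity. By \cref{Bound in Nth direction} and \cref{Bound in 1 to N-1 the directions}, the sequence $\{z^k\}$ is bounded in $\R^N$; hence $\{x^k\}$ is precompact in $\bn$, and setting $\rho_0:=\sup_k d_{\bn}\(0,x^k\)<\infty$ the triangle inequality gives $B\(x^k,R(\eps)\)\subset B\(0,\rho_0+R(\eps)\)$, so that
\begin{align}
	\int_{B\(0,\rho_0+R(\eps)\)}\abs{u_{n_k}}^{p+1}\dv \ge 1-\eps\qquad\text{for every }k.\label{eq:tight-min}
\end{align}
In other words, the minimizing sequence is tight at a fixed location, no $L^{p+1}$--mass being lost at infinity.

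With \eqref{eq:tight-min} in hand the rest is standard. Since $\widetilde{H^1\(\bn\)}$ is a closed --- hence weakly closed --- subspace of $H^1\(\bn\)$, a further subsequence satisfies $u_{n_k}\rightharpoonup\hat u$ in $H^1\(\bn\)$ with $\hat u\in\widetilde{H^1\(\bn\)}$. For each fixed $\rho>0$ the hyperbolic ball $B\(0,\rho\)$ is a bounded smooth domain, so the Rellich--Kondrakov theorem (valid because $1<p<2^*-1$) makes $H^1\(B\(0,\rho\)\)\hookrightarrow L^{p+1}\(B\(0,\rho\)\)$ compact, whence $u_{n_k}\to\hat u$ in $L^{p+1}\(B\(0,\rho\)\)$. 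Taking $\rho=\rho_0+R(\eps)$ and combining this local strong convergence with the uniform tail bound from \eqref{eq:tight-min} --- which yields $\int_{\bn\setminus B\(0,\rho\)}\abs{u_{n_k}}^{p+1}\dv\le\eps$ for all $k$, and therefore also $\int_{\bn\setminus B\(0,\rho\)}\abs{\hat u}^{p+1}\dv\le\eps$ --- a routine splitting argument upgrades it to $u_{n_k}\to\hat u$ strongly in $L^{p+1}\(\bn\)$. Hence $\norm{\hat u}_{L^{p+1}\(\bn\)}=\lim_k\norm{u_{n_k}}_{L^{p+1}\(\bn\)}=1$, i.e.\ $\hat u\in M$ and in particular $\hat u\ne 0$. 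Finally, weak lower semicontinuity of the equivalent Hilbert norm gives $\Psi(\hat u)=\tfrac12\norm{\hat u}_\la^2\le\liminf_{k\to\infty}\tfrac12\norm{u_{n_k}}_\la^2=m$, while $\hat u\in M$ forces $\Psi(\hat u)\ge m$; hence $\Psi(\hat u)=m$ and $\hat u$ is the sought constrained minimizer.

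The step I expect to be the main obstacle is precisely the passage from Lions' ``concentration'' alternative to genuine compactness. Vanishing is excluded via \cref{Lieb}, and dichotomy via the strict inequality $\al^{2/(p+1)}+(1-\al)^{2/(p+1)}>1$ for $\al\in(0,1)$; but concentration alone only localizes the $L^{p+1}$--mass near the \emph{moving} centers $x^k$, which a priori could run off to infinity --- indeed, the $\phi_n$ example preceding the variational framework shows that $\widetilde{H^1\(\bn\)}$ is \emph{not} compactly embedded in $L^{p+1}\(\bn\)$. What rescues the argument is the $G$--equivariance built into $\widetilde{H^1\(\bn\)}$: via the operators $T_g$ and \cref{relationship between translation and action} the centers $\{x^k\}$ are pinned into a fixed ball, which is exactly the content of \cref{Bound in Nth direction} and \cref{Bound in 1 to N-1 the directions}. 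Once this uniform tightness is secured, Rellich--Kondrakov on balls together with the tail estimate closes the proof, the remaining ingredients --- weak closedness of $\widetilde{H^1\(\bn\)}$, the compact embedding on balls, and weak lower semicontinuity of $\norm{\cdot}_\la$ --- being routine.
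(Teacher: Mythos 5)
Your proposal is correct and follows essentially the same route as the paper: concentration via \cref{Checking concentration compactness}, boundedness of the concentration centers via \cref{Bound in Nth direction} and \cref{Bound in 1 to N-1 the directions}, Rellich--Kondrakov on a fixed ball, and weak lower semicontinuity of $\norm{\cdot}_\la$. The only cosmetic difference is in concluding $\hat u\in M$: you upgrade to global strong $L^{p+1}$ convergence via the tail bound, while the paper sandwiches $\int_{\bn}\abs{\hat u}^{p+1}\dv=1$ between the concentration estimate and weak lower semicontinuity of the $L^{p+1}$ norm; both are valid.
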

\begin{proof}
	Let $\{u_n\}$ be a minimizing sequence for the constrained minimization. Then from the \cref{Checking concentration compactness}, we have that  there exists $z^k \in \R^N$ such that
	\begin{align*}
		\forall \eps > 0, \; \exists R(\eps) > 0, \;& \int_{B_E\(z^k, R(\eps)\)} \rho_{n_k} \mathrm{~d}z \geq 1 - \eps
	\end{align*}
	From \cref{Bound in Nth direction}, we have $\{z_N^k\}$ is bounded in $\R$, and from \cref{Bound in 1 to N-1 the directions}, we obtain $\{\overline{z^k}\}$ is bounded in $\R^{N-1}$, i.e., there exists a compact set $K \subset \R^N$ containing the origin, such that $\{z^k\} \subset K$. Therefore, we have
	\begin{align*}
		\int_{B_E\(K, R(\eps)\)} \rho_{n_k} \mathrm{~d}z \geq 1 - \eps, 
	\end{align*}
	where $B_E\(K, R(\eps)\) := \{z \in \R^N \, : \, \mathrm{dist} \(K, z\) < R(\eps)\}$. Now from (\ref{COV}) we have
	\begin{align}
		\int_{\ex\(B_E\(K, R(\eps)\)\)} \abs{u_{n_k}}^{p+1} \dv \geq 1 - \eps, \quad \forall \eps > 0. \label{36}
	\end{align}
	Observe that $\{u_{n_k}\}$ is also a minimizing sequence. It has a subsequence, still denoted as $\{u_{n_k}\}$, such that $u_{n_k} \rightharpoonup \hat{u} \; \text{in} \; \widetilde{H^1\(\bn\)}.$ From the Rellich compactness theorem, we get for each $\eps > 0$,
	\begin{align*}
		u_{n_k} \to \hat{u} \quad \text{in} \; L^{p+1} \(\ex\(B_E\(K, R(\eps)\)\)\).
	\end{align*}
	Now (\ref{36}) implies
	\begin{align*}
		\int_{\bn} \abs{\hat{u}}^{p+1} \dv \geq 1.
	\end{align*}
	And from weak lower semi-continuity of the $L^{p+1}$ norm, we have
	\begin{align*}
		\int_{\bn} \abs{\hat{u}}^{p+1} \dv \leq \liminf_{k \to \infty} \int_{\bn} \abs{u_{n_k}}^{p+1} \dv = 1
	\end{align*}
	Therefore
	\begin{align*}
		\int_{\bn} \abs{\hat{u}}^{p+1} \dv = 1.
	\end{align*}
	This implies $\hat{u} \in M$. Lastly, from the weak lower semi-continuity of the Sobolev norm, we have
	\begin{align*}
		\Psi(\hat{u}) \leq \liminf_{k \to \infty} \Psi\(u_{n_k}\).
	\end{align*}
	Hence $\hat{u}$ is a constrained minimizer for $\Psi$.
\end{proof}
Finally, we can prove the existence theorem:

\begin{proof}[Proof of \cref{Existence thm hyperbolic}]{\;}
	From the previous theorem, we have $\hat{u}$ is a constrained minimizer of $\Psi : \widetilde{H^1\(\bn\)} \to \R$ defined as
	\begin{align*}
		\Psi(u) = \int_{\bn} \[\f{1}{2} \abs{\nabla_{\bn} u}^2 - \f{\la}{2} u^2 \] \mathrm{~d} V_{\bn} = \f{1}{2} \norm{u}_\la^2
	\end{align*}
	restricted on the submanifold
	\begin{align*}
		M = \{u \in \widetilde{H^1\(\bn\)} \; :\; \norm{u}_{L^{p+1}\(\bn\)} = 1\}.
	\end{align*}
	Now by Lagrange's multiplier theorem, we have
	\begin{align*}
		\< \hat{u}, v \>_\la - \mu \int_{\bn} \abs{\hat{u}}^{p-1} \hat{u} v \dv, \quad \text{for some} \; \mu \in \R^+ \; \text{and} \; \forall v \in \widetilde{H^1\(\bn\)}.
	\end{align*}
	Now consider a function $I : \widetilde{H^1\(\bn\)} \to \R$ to be
	\begin{align*}
		I(u) : = \f{1}{2} \norm{u}_\la^2 - \f{\mu}{p+1} \int_{\bn} \abs{u}^{p+1} \dv.
	\end{align*}
	Therefore, $\hat{u}$ is a critical point of $I$. Now we take the extension of $I$ in $H^1(\bn)$ to be $J : H^1(\bn) \to \R$ such that
	\begin{align*}
		J(u) : = \f{1}{2} \norm{u}_\la^2 - \f{\mu}{p+1} \int_{\bn} \abs{u}^{p+1} \dv.
	\end{align*}
	
	
	By the change of variable formula, we have that $J$ is invariant under the actions of $T_g, \; g \in G$, i.e.,
	\begin{align*}
		J\(T_g u \) & = J(u), \quad \forall g \in G, u \in H^1\(\bn\).
	\end{align*}
	Now for each $g \in G$, we have
	\begin{align*}
		DJ(T_g u) \[v\] & = \lim\limits_{t \to 0} \f{J(T_g u + t v) - J(T_g u)}{t}
		 = \lim\limits_{t \to 0} \f{J\(u + t T_{g'}v\) - J(u)}{t}
		 = DJ(u) \[T_{g'}v\] 
	\end{align*}
	Let $\nabla J(u)$ be the gradient of $J$ at $u$. Now, we can see $\nabla J(T_g u) = T_g \(\nabla J(u)\), \forall g \in G.$ Therefore, $\nabla J$ is equivariant. Since $\widetilde{H^1\(\bn\)}$ is a closed subspace of the Hilbert space $H^1\(\bn\)$, we can write
	\begin{align*}
		H^1\(\bn\) & = \widetilde{H^1\(\bn\)} \oplus \widetilde{H^1\(\bn\)}^\perp.
	\end{align*}
	Since $\hat{u} \in \widetilde{H^1\(\bn\)}, \; T_g \hat{u} = \hat{u}, \; \forall g \in G $. And, similarly, we have $\nabla J(\hat{u}) \in \widetilde{H^1\(\bn\)}.$ Since $\hat{u}$ is a critical point of $I$, we have $\nabla J(\hat{u}) \in \widetilde{H^1\(\bn\)}^{\perp}.$  Therefore, we can argue that $\nabla J(\hat{u})$ must be zero, i.e., $\hat{u}$ is a critical point of $J$. So, $\hat{u}$ is a critical point of $J$, i.e., $\hat{u}$ solves
	\begin{align*}
		-\Delta_{\mathbb{B}^N} u  \, - \,  \lambda  u &= \mu |u|^{p-1}u \quad \text{in} \; H^1\(\bn\).
	\end{align*}
	Then $w : = \mu^{\f{1}{p-1}} \hat{u}$ solves (\ref{(1)}). Since $\hat{u} \in \widetilde{H^1\(\bn\)}$, it is non-radial and sign-changing. Therefore, $w$ is also non-radial and sign-changing.
	
\end{proof}

Now, we use the \cref{Existence thm hyperbolic} to prove the \cref{Existence thm HSM}.

\begin{proof}[Proof of \cref{Existence thm HSM}]
	Let $v$ be a sign-changing solution to (\ref{(1)}) established in \cref{Existence thm hyperbolic}, then from the relationship 
	\begin{align*}
		 u(y,r) = r^{\f{2-n}{2}} v \circ M^{-1} (y,r), \;\; (y,r) \in \rn_+ = \R^{h+1}_+,
	\end{align*}
	it is obvious that the corresponding solution $u$ to (\ref{HSM equation}) is also sign-changing in nature.\\
	
	Let $\overline{g} \in O(h) = O(N-1)$ be arbitrary and $v \in \widetilde{H^1\(\bn\)}$ be a solution to (\ref{(1)}). Consider
	\begin{align*}
		g = \begin{bmatrix}
			\overline{g} & 0\\
			0 & -1
		\end{bmatrix}, \; 
		& g' = \begin{bmatrix}
			\mathrm{Id}_{N-1} & 0\\
			0 & -1
		\end{bmatrix} \in G.
	\end{align*} 
	Now 
	\begin{align*}
		u(\overline{g}y,r) & = r^{\f{2-n}{2}} v \circ M^{-1} (\overline{g}y,r)\\
		& = r^{\f{2-n}{2}} v \( \f{2\overline{g}y}{\abs{y}^2 + \(1 + r\)^2}, \f{1-\abs{(y,r)}^2}{\abs{y}^2 + \(1 + r\)^2} \)\\
		& = r^{\f{2-n}{2}} v \(g \, g' \( \f{2y}{\abs{y}^2 + \(1 + r\)^2}, \f{1-\abs{(y,r)}^2}{\abs{y}^2 + \(1 + r\)^2} \)\)\\
		& = r^{\f{2-n}{2}} v \circ M^{-1} (y,r) = u(y,r)
	\end{align*}
	Hence, $u$ is a bi-radial sign-changing solution to (\ref{HSM equation}).
\end{proof}

\section{Multiplicity theorems}

Here we recall the examples of groups $\Ga$ and corresponding continuous onto homomorphisms from \cite{MannaManna+2025} to define the subspace $H^1\(\bn\)^\phi$ as in (\ref{phi equivariant subsapce}). First, we take $\tau \(x_1,x_2,x_3,x_4, \cdots, x_N\) := \(x_3,x_4,x_1,x_2,x_5,\cdots,x_N\).$. We divide the examples into two cases: For the case $N=5$, we take the group $\Ga$ to be
\begin{align*}
	& \Ga := \mathrm{Span} \{O(2) \otimes O(2) \otimes O(N-4), \tau\},
\end{align*}
and, for $N = 4\; \& \; N \geq 6$, we consider \begin{align*}
	& \Ga := \mathrm{Span} \{O(2) \otimes O(2) \otimes \{\mathrm{Id}\}, \tau\}.
\end{align*}
And, we take $\phi : \Ga \to \mathbb{Z}_2$ as
\begin{align*}
	\phi(g) & = 1,\;\forall g \in \Ga, \quad \text{ and } \quad
	\phi(\tau)  = -1.
\end{align*}
We note, in this case, the conditions \eqref{(A_1)} hold for the point $(1/2,0,\dots,0)$ and \eqref{(A_2)} holds as $\mathrm{Fix}_{\bn}(\Ga)=\{0\}$. 

In the following lemma we prove that the subspaces $\widetilde{H^1\(\bn\)}$ and $H^1\(\bn\)^\phi$ have only trivial intersection.
\begin{lemma}\label{zero intersection of two subspaces}
	For $N \geq 4$, $\widetilde{H^1\(\bn\)} \cap H^1(\bn)^\phi= \{0\}$.
\end{lemma}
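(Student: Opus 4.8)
The plan is, in each of the two subspaces, to produce a single elementary symmetry under which every function must transform with an opposite sign, so that any common element is forced to vanish. First recall the constraints. From \eqref{definition of subspace}, $u\in\widetilde{H^1(\bn)}$ means $u(gx)=-u(x)$ for all $g\in G$; specializing to $g\in G$ with $A=\mathrm{Id}_{N-1}$ shows $u$ is odd in $x_N$, and then specializing to arbitrary $A\in O(N-1)$ and composing with this relation shows $u$ is $O(N-1)$-invariant in the first $N-1$ variables. From \eqref{phi equivariant subsapce}, $u\in H^1(\bn)^\phi$ satisfies $u(\tau x)=\phi(\tau)u(x)=-u(x)$, where $\tau(x_1,\dots,x_N)=(x_3,x_4,x_1,x_2,x_5,\dots,x_N)$, and also $u(hx)=u(x)$ for every $h$ in the $O(2)\otimes O(2)$ factor of $\Ga$ acting on the first four coordinates.

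For $N\ge5$: the permutation $\tau$ moves only the coordinates $x_1,\dots,x_4$ and fixes $x_N$, so it acts as some $P\in O(N-1)$ on $(x_1,\dots,x_{N-1})$ and as the identity on $x_N$. Hence every $u\in\widetilde{H^1(\bn)}$, being $O(N-1)$-invariant in the first $N-1$ variables, satisfies $u(\tau x)=u(x)$; but every $u\in H^1(\bn)^\phi$ satisfies $u(\tau x)=-u(x)$. Therefore a common $u$ satisfies $u=-u$, i.e., $u\equiv0$.

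For $N=4$, the element $\tau$ moves $x_4$ and is no longer a symmetry of $\widetilde{H^1(\bn)}$, so I argue instead with the reflection $\sigma$ given by $(x_1,x_2,x_3,x_4)\mapsto(x_1,x_2,x_3,-x_4)$, which lies in the second $O(2)$ factor of $\Ga$ with $\phi(\sigma)=1$. Then every $u\in H^1(\bn)^\phi$ satisfies $u(\sigma x)=u(x)$, while every $u\in\widetilde{H^1(\bn)}$ with $N=4$ is odd in $x_N=x_4$, i.e., $u(\sigma x)=-u(x)$; again $u\equiv0$. The whole argument is just bookkeeping of which coordinates each group element moves, and I do not expect a genuine obstacle; the only point requiring care is that $\tau$ fixes $x_N$ precisely when $N\ge5$, which is exactly why the case $N=4$ needs its own one-line treatment.
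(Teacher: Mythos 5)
Your proof is correct, and for $N\ge 5$ it takes a genuinely cleaner route than the paper's. The key observation you exploit --- that composing two elements of $G$ shows every $u\in\widetilde{H^1(\bn)}$ is both odd in $x_N$ and invariant under $O(N-1)$ acting on $(x_1,\dots,x_{N-1})$, so that $\tau$ (which fixes $x_N$ once $N\ge5$) acts trivially on $\widetilde{H^1(\bn)}$ while acting by $-1$ on $H^1(\bn)^\phi$ --- disposes of all $N\ge5$ at once. The paper instead splits into the cases $N=4,\ N\ge6$ (where it exhibits an element of $G\cap\ker\phi$, exactly your $N=4$ argument with the reflection $\mathrm{diag}(I_{N-1},-1)$-type element sitting inside $O(2)\otimes O(2)\otimes O(N-4)$) and $N=5$ (where it builds the reflection $z\mapsto(z',-z_5)$ as a composition $h\circ g\circ\tau$ and tracks the signs contributed by each factor). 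Your version has two advantages: it avoids the paper's case distinction driven by which third block ($O(N-4)$ versus $\{\mathrm{Id}\}$) appears in $\Gamma$ --- a point on which the paper's definition and its proof are not even consistent with each other --- since for $N\ge5$ you only use $\tau\in\Gamma$ with $\phi(\tau)=-1$, which holds in either convention; and it makes transparent \emph{why} the two symmetry types are incompatible ($\tau$ is an $O(N-1)$-rotation of the first $N-1$ coordinates, hence invisible to $\widetilde{H^1(\bn)}$). Your separate one-line treatment of $N=4$ via $\sigma=\mathrm{diag}(1,1,1,-1)\in O(2)\otimes O(2)$ with $\phi(\sigma)=1$ is exactly what is needed there and coincides with the paper's argument for that case.
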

\begin{proof}
	For the cases $ N = 4$ and $N \geq 6$, the lemma follows easily as $\tilde{G} = G \cap O(2) \otimes O(2) \otimes O(N-4) \neq \emptyset.$ Then for any $ u \in \widetilde{H^1\(\bn\)} \cap H^1(\bn)^\phi$, we have for any $g_1\in \tilde{G}$
	\begin{align*}
		&u (g_1 x) = u(x), \quad \forall x \in \bn &&\text{ as } u \in u \in H^1(\bn)^\phi,\\
		\text{Also}, \quad &u (g_1 x) = - u(x), \quad \forall x \in \bn, &&\text{ as } u \in \widetilde{H^1\(\bn\)}.
	\end{align*}
	Hence $u\equiv 0$. For the case $N=5$, take $g\in O(2) \otimes O(2) \otimes \{\mathrm{Id}\}, \, h\in G$ as
	\begin{align*}
		g  = \begin{bmatrix}
			0 & 1 & 0 & 0 & 0\\
			1 & 0 & 0 & 0 & 0\\
			0 & 0 & 0 &1 & 0\\
			0 & 0 & 1 & 0 & 0\\
			0 & 0 & 0 & 0 & 1
		\end{bmatrix} \quad  \text{ and } \quad
		h  = \begin{bmatrix}
			0 & 0 & 0 & 1 & 0\\
			0 & 0 & 1 & 0 & 0\\
			0 & 1 & 0 &0 & 0\\
			1 & 0 & 0 & 0 & 0\\
			0 & 0 & 0 & 0 & -1
		\end{bmatrix} 
	\end{align*}  
	If $ \exists \, u \(\neq 0\) \in \widetilde{H^1\(\bn\)} \cap H^1(\bn)^\phi$, then we can easily see
	\begin{align*} u(z)=-u(\tau (z))=-u(g\circ \tau (z))=-u(h\circ g\circ\tau (z))=u(z',-z_5)=-u(z).\end{align*}
	Hence $u=0$.
\end{proof}
Now, we prove the multiplicity theorem.

\begin{proof}[Proof of \cref{Multiplicity thm hyperbolic}]{\;}
	From \cite{MannaManna+2025}, we have that for $N \geq 4$ there exists a non-trivial solution of (\ref{(1)}) in $H^1(\bn)^\phi$. And from the \cref{Existence thm hyperbolic} there exists a non-trivial solution of (\ref{(1)}) in $\widetilde{H^1\(\bn\)}$. Now, since both the subspaces $H^1(\bn)^\phi$ and $\widetilde{H^1\(\bn\)}$ contain only non-radial sign-changing functions except zero, from \cref{zero intersection of two subspaces} we can argue that (\ref{(1)}) has two non-radial sign-changing solutions in $H^1\(\bn\)$.
\end{proof}


\section{Appendix}

Here we prove the change of variable formula (\ref{COV}) for the exponential map, $\ex : T_0 \bn \to \bn$ given by
\begin{align*}
	\mathrm{exp}_0(z) & = \f{\sinh \(2|z|\)}{1+ \cosh \(2|z|\)} \f{z}{|z|},  \quad \forall z \in T_0 \(\bn\)\cong \rn.
\end{align*}
Let us first mention a well-known determinant identity.\\

\begin{lemma}{(Sylvester's determinant identity)}
	Let $A$ and $B$ be two matrices of sizes $m \times n$ and $ n\times m$ respectively, then
	\begin{align*}
		\det \(I_m + AB\) & = \det \(I_n + BA\).
	\end{align*}
\end{lemma}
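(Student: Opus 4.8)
This is the standard Sylvester determinant identity, and the plan is to prove it by a single block--matrix computation that requires no invertibility hypothesis on $A$ or $B$.

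First I would introduce the $(m+n)\times(m+n)$ block matrix
$$M := \begin{pmatrix} I_m & -A \\ B & I_n \end{pmatrix},$$
and evaluate $\det M$ in two ways by block Gaussian elimination, that is, by multiplying $M$ by block unitriangular matrices, each of which has determinant $1$. Multiplying $M$ on the left by $\begin{pmatrix} I_m & 0 \\ -B & I_n \end{pmatrix}$ clears the lower--left block and produces
$$\begin{pmatrix} I_m & 0 \\ -B & I_n \end{pmatrix} M = \begin{pmatrix} I_m & -A \\ 0 & I_n + BA \end{pmatrix},$$
so, by the determinant formula for block--triangular matrices, $\det M = \det(I_m)\det(I_n + BA) = \det(I_n + BA)$. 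Multiplying $M$ instead on the right by the same matrix $\begin{pmatrix} I_m & 0 \\ -B & I_n \end{pmatrix}$ clears the lower--left block from the other side and produces
$$M \begin{pmatrix} I_m & 0 \\ -B & I_n \end{pmatrix} = \begin{pmatrix} I_m + AB & -A \\ 0 & I_n \end{pmatrix},$$
whence $\det M = \det(I_m + AB)\det(I_n) = \det(I_m + AB)$. Equating the two evaluations of $\det M$ gives $\det(I_m + AB) = \det(I_n + BA)$, as required.

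The computation uses only the multiplicativity of the determinant and the identity $\det\begin{pmatrix} P & Q \\ 0 & R \end{pmatrix} = \det P\,\det R$ for square diagonal blocks $P,R$, so there is no genuine obstacle here. The one point deserving a moment's attention is the verification of the two displayed block products (a routine $2\times 2$ block multiplication), together with the observation that $A$ and $B$ are not assumed square or invertible — which is precisely why one should not argue directly from the conjugacy $A^{-1}(I + AB)A = I + BA$: that presupposes $A$ square and invertible, and adapting it to the general case would require first padding $A$ and $B$ with zero blocks to obtain square matrices and then invoking a density/continuity argument. The block--matrix proof above circumvents all of this.
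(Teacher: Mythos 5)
Your proof is correct and complete. The paper itself offers no proof of this lemma --- it is merely quoted as ``a well-known determinant identity'' --- so there is no argument of the authors' to compare against; your block-matrix computation is the standard self-contained proof. Both displayed block products check out: left-multiplying $M$ by the unitriangular matrix clears the lower-left block and leaves $I_n+BA$ in the lower-right corner, right-multiplying by the same matrix leaves $I_m+AB$ in the upper-left corner, and in each case the block-triangular determinant formula applies because the diagonal blocks are square. Your closing remark is also well taken: the conjugation argument $A^{-1}(I+AB)A=I+BA$ is unavailable here since $A$ is rectangular, and the block-elimination route avoids any invertibility or density considerations. For the paper's application the matrices are $z z^{T}$ with $z$ a column vector (so $m=N$, $n=1$), where the identity reduces $\det\bigl(I_N+B^{-1}(A-C)zz^{T}\bigr)$ to the scalar $1+B^{-1}(A-C)|z|^{2}$; your general statement covers this case with no further work.
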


\begin{proof}[Proof of \cref{Change of variable}]
	Let us denote 
	\begin{align*}
		\al(z) = \f{\sinh \(2|z|\)}{1+ \cosh \(2|z|\)}.
	\end{align*}
	Then, 
	\begin{align*}
		\f{\partial \al}{\partial z_j} =  \f{2z_j}{|z| \(1 + \cosh (2|z|)\)}
	\end{align*}
	Now
	\begin{align*}
		J_{ij}(z) & = \f{\partial}{\partial z_j} \( \al (z) \f{z_i}{|z|} \)  = (A-C)z_iz_j + B \delta_{ij},
	\end{align*}
	where
	\begin{align*}
		A  = \f{2}{|z|^2 \(1 + \cosh (2|z|)\)}, \quad
		B  = \f{\sinh(2|z|)}{|z|\(1 + \cosh (2|z|)\)}, \quad \text{ and }\quad
		C  = \f{\sinh(2|z|)}{|z|^3\(1 + \cosh (2|z|)\)}.
	\end{align*}
	Therefore, the Jacobian is
	\begin{align*}
		\det \(J(z)\) & = \det \(B I + \(A-C\) z z^T\), \quad B > 0\\
		& = B^N \cdot (1 + z^T \cdot B^{-1}(A-C)z)\\
		& = B^N \(1 +B^{-1}(A-C) |z|^2 \)\\
		& = B^N + (A - C) B^{N-1} |z|^2\\
		& = \[\f{\sinh(2|z|)}{|z|\(1 + \cosh (2|z|)\)}\]^N + \\
		& \;\;\;\;\; \[\f{2}{|z|^2 \(1 + \cosh (2|z|)\)} - \f{\sinh(2|z|)}{|z|^3\(1 + \cosh (2|z|)\)}\] \[\f{\sinh(2|z|)}{|z|\(1 + \cosh (2|z|)\)}\]^{N-1} |z|^2\\
		& = \[\f{\sinh(2|z|)}{|z|\(1 + \cosh (2|z|)\)}\]^{N-1} \[\f{\sinh(2|z|)}{|z|\(1 + \cosh (2|z|)\)} + \f{2|z| - \sinh(2|z|)}{|z|\(1 + \cosh (2|z|)\)}\]\\
		& = \[\f{\sinh(2|z|)}{|z|\(1 + \cosh (2|z|)\)}\]^{N-1} \cdot \f{2}{1 + \cosh (2|z|)}.
	\end{align*}
	We have that $\Omega$ is an open subset of $\bn$ and $u : \bn \to \R$ be a measurable function. Then
	\begin{align*}
		\int_{\Omega} u \dv & = \int_{\ex^{-1}\(\Omega\)} u \(\ex (z)\) \( \f{2}{1 - |\ex(z)|^2}\)^N \[\f{\sinh(2|z|)}{|z|\(1 + \cosh (2|z|)\)}\]^{N-1} \cdot \f{2}{1 + \cosh (2|z|)} \mathrm{~d}z\\
		& = \int_{\ex^{-1}\(\Omega\)} u \(\ex (z)\) \(1 + \cosh(2|z|)\)^N \[\f{\sinh(2|z|)}{|z|\(1 + \cosh (2|z|)\)}\]^{N-1} \cdot \f{2}{1 + \cosh (2|z|)} \mathrm{~d}z\\
		& = \int_{\ex^{-1}\(\Omega\)} u \(\ex (z)\) \cdot 2 \[\f{\sinh(2|z|)}{|z|}\]^{N-1} \mathrm{~d}z,
	\end{align*}
	assuming that the integrals have finite values. \\
	
	Hence the change of variable formula for the exponential map at $0$ is
	\begin{align*}
		\int_{\Omega} u \dv & = \int_{\ex^{-1}\(\Omega\)} u \(\ex (z)\) \cdot \Upsilon(z) \mathrm{~d}z 
	\end{align*}
\end{proof}

Finally, we prove the \cref{Multiplicity thm HSM}, which follows from the findings in \cite{MannaManna+2025}. In this article, we maintained the constraint on dimension as $2<k<n$, where $\R^n = \R^h \times \R^k$. For the proof, we use the least dimension; specifically, we take $k=3$. For the solutions of (\ref{Existence thm HSM}), the $\R^k$-coordinates become radial, and we keep the radial dimension as it is. We take involution $\tau$ of the $\R^h$-coordinates, where $h \geq 4$. Post-involution, the points reside within the same sphere in $R^n$, although the solution will have different signs at those two points. Thus, we prove the theorem for $n \geq 7$.
\begin{proof}[Proof of \cref{Multiplicity thm HSM}]{\;}
	(a) To prove this part, we use the $N=5$ case as in \cite{MannaManna+2025}. Suppose $v \in H^1\(\bn\)^\phi$ is a solution to (\ref{(1)}). Then we have
	\begin{align*}
		u (y,z) = u(y,r) = r^{\f{2-n}{2}} v \( \f{2y}{\abs{y}^2 + \(1 + r\)^2}, \f{1-\abs{(y,r)}^2}{\abs{y}^2 + \(1 + r\)^2} \)
	\end{align*}
	solves (\ref{HSM equation}), where $y = (y_1, \cdots, y_4) \in \R^4, z = (z_1, \cdots, z_3)\in \R^3$ such that $\abs{z} = r$. Let us define $\tau(y,z) = (y_3, y_4, y_1, y_2, z_1, \cdots, z_3) = (y',z)$. Then $(y,z)$ and $\tau(y,z)$ belong to the same sphere in $\R^7$, but
	\begin{align*}
		u\(\tau(y,z)\) = u (y',z) = u(y', r) & = r^{\f{2-n}{2}} v \( \f{2y'}{\abs{y}^2 + \(1 + r\)^2}, \f{1-\abs{(y,r)}^2}{\abs{y}^2 + \(1 + r\)^2} \)\\
		& = r^{\f{2-n}{2}} v \(\tau \( \f{2y}{\abs{y}^2 + \(1 + r\)^2}, \f{1-\abs{(y,r)}^2}{\abs{y}^2 + \(1 + r\)^2} \)\)\\
		& = - r^{\f{2-n}{2}} v \( \f{2y}{\abs{y}^2 + \(1 + r\)^2}, \f{1-\abs{(y,r)}^2}{\abs{y}^2 + \(1 + r\)^2} \)\\
		& = - u(y,z)
	\end{align*}
	Therefore, $u$ is a non-radial sign-changing solution to (\ref{Existence thm HSM}).\\

	(b) To prove this part, we use, $N \geq 6$ case as in \cite{MannaManna+2025}. In this case $y = (y_1, \cdots, y_{n-3}) \in \R^{n-3}$ and $z = (z_1, \cdots, z_3)\in \R^3$. And the involution is defined as
	\begin{align*}
		\tau(y,z) = (y_3, y_4, y_1, y_2, y_5, \cdots, y_{n-3}, z_1, \cdots, z_3)
	\end{align*}

	Suppose $\{v_k\} \subset H^1\(\bn\)^\phi$ is a sequence of solutions to (\ref{(1)}). Then we can make similar constructions as above, to get a sequence of non-radial sign-changing solutions $\{u_k\}$ to (\ref{HSM equation}).
\end{proof}

\bibliographystyle{plain}

\begin{thebibliography}{15}
	
	\bibitem{MR1918928}
	Marino Badiale and Gabriella Tarantello.
	\newblock A {S}obolev-{H}ardy inequality with applications to a nonlinear
	elliptic equation arising in astrophysics.
	\newblock {\em Arch. Ration. Mech. Anal.}, 163(4):259--293, 2002.
	
	\bibitem{MR3033174}
	Catherine Bandle and Yoshitsugu Kabeya.
	\newblock On the positive, ``radial'' solutions of a semilinear elliptic
	equation in {$\Bbb H^N$}.
	\newblock {\em Adv. Nonlinear Anal.}, 1(1):1--25, 2012.
	
	\bibitem{MR2898778}
	Mousomi Bhakta and K.~Sandeep.
	\newblock Poincar\'{e}-{S}obolev equations in the hyperbolic space.
	\newblock {\em Calc. Var. Partial Differential Equations}, 44(1-2):247--269,
	2012.
	
	\bibitem{MR2474591}
	D.~Castorina, I.~Fabbri, G.~Mancini, and K.~Sandeep.
	\newblock Hardy-{S}obolev extremals, hyperbolic symmetry and scalar curvature
	equations.
	\newblock {\em J. Differential Equations}, 246(3):1187--1206, 2009.
	
	\bibitem{MR4068471}
	 M\'{o}nica Clapp and Alberto Salda\~{n}a. 
	\newblock Entire nodal solutions to the critical Lane-Emden system. 
	\newblock {\em Comm. Partial Differential Equations 45}  (2020), no. 4, 285-302.
	
	\bibitem{MR1896096}
	Pietro d'Avenia.
	\newblock Non-radially symmetric solutions of nonlinear {S}chr\"{o}dinger
	equation coupled with {M}axwell equations.
	\newblock {\em Adv. Nonlinear Stud.}, 2(2):177--192, 2002.
	
	\bibitem{MR2589572}
	Marita Gazzini and Roberta Musina.
	\newblock Hardy-{S}obolev-{M}az'ya inequalities: symmetry and breaking symmetry
	of extremal functions.
	\newblock {\em Commun. Contemp. Math.}, 11(6):993--1007, 2009.
	
	\bibitem{MR2083309}
	Jun Kobayashi and Mitsuharu \^{O}tani.
	\newblock The principle of symmetric criticality for non-differentiable
	mappings.
	\newblock {\em J. Funct. Anal.}, 214(2):428--449, 2004.
	
	\bibitem{MR0778970}
	P.-L. Lions.
	\newblock The concentration-compactness principle in the calculus of
	variations. {T}he locally compact case. {I}.
	\newblock {\em Ann. Inst. H. Poincar\'e{} Anal. Non Lin\'eaire}, 1(2):109--145,
	1984.
	
	\bibitem{MR778974}
	P.-L. Lions.
	\newblock The concentration-compactness principle in the calculus of
	variations. {T}he locally compact case. {II}.
	\newblock {\em Ann. Inst. H. Poincar\'e{} Anal. Non Lin\'eaire}, 1(4):223--283,
	1984.
	
	\bibitem{MR2223717}
	G.~Mancini, I.~Fabbri, and K.~Sandeep.
	\newblock Classification of solutions of a critical {H}ardy-{S}obolev operator.
	\newblock {\em J. Differential Equations}, 224(2):258--276, 2006.
	
	
	\bibitem{MR2483639}
	Gianni Mancini and Kunnath Sandeep.
	\newblock On a semilinear elliptic equation in {$\Bbb H^n$}.
	\newblock {\em Ann. Sc. Norm. Super. Pisa Cl. Sci. (5)}, 7(4):635--671, 2008.
	
	\bibitem{MannaManna+2025}
	Atanu Manna and Bhakti~Bhusan Manna.
	\newblock Existence and multiplicity of non-radial sign-changing solutions for
	a semilinear elliptic equation in hyperbolic space.
	\newblock {\em Forum Mathematicum}, 2025.
	
	
	\bibitem{MR817985}
	Vladimir~G. Maz'ja.
	\newblock {\em Sobolev spaces}.
	\newblock Springer Series in Soviet Mathematics. Springer-Verlag, Berlin, 1985.
	\newblock Translated from the Russian by T. O. Shaposhnikova.
	
	\bibitem{MR2416099}
	Roberta Musina.
	\newblock Ground state solutions of a critical problem involving cylindrical
	weights.
	\newblock {\em Nonlinear Anal.}, 68(12):3972--3986, 2008.
	
	
	\bibitem{MR4221225}
	John~G. Ratcliffe.
	\newblock {\em Foundations of hyperbolic manifolds}, volume 149 of {\em
		Graduate Texts in Mathematics}.
	\newblock Springer, Cham, third edition, [2019] \copyright 2019.
	
	\bibitem{MR3495548}
	Manfred Stoll.
	\newblock {\em Harmonic and subharmonic function theory on the hyperbolic
		ball}, volume 431 of {\em London Mathematical Society Lecture Note Series}.
	\newblock Cambridge University Press, Cambridge, 2016.
	

	
	\bibitem{MR2317783}
	A.~Tertikas and K.~Tintarev.
	\newblock On existence of minimizers for the {H}ardy-{S}obolev-{M}az'ya
	inequality.
	\newblock {\em Ann. Mat. Pura Appl. (4)}, 186(4):645--662, 2007.
	
	
	
\end{thebibliography}

\end{document}